\numberwithin{equation}{section}
\renewcommand{\geq}{\geqslant}
\renewcommand{\leq}{\leqslant}
\newtheorem{theorem}{Theorem}[section]
\newtheorem{lemma}[theorem]{Lemma}
\newtheorem{corollary}[theorem]{Corollary}
\newtheorem*{main-theorem}{Main Theorem}
\newtheorem*{remark*}{Remark}
\numberwithin{equation}{section}
\DeclareMathOperator{\sgn}{sgn}
\DeclareMathOperator{\cn}{cn}
\DeclareMathOperator{\K}{K}
\title{Bounds on unstable spectrum \protect \\
for dispersive Hamiltonian PDEs}
\author[1]{Jared~C.~Bronski\thanks{Email: bronski@illinois.edu}}
\author[1]{Vera~Mikyoung~Hur\thanks{Email: verahur@illinois.edu}}
\author[1]{Sarah~E.~Simpson\thanks{Email: sarah97@illinois.edu}}
\affil[1]{Department of Mathematics, University of Illinois at Urbana-Champaign, \protect\\
Urbana, IL 61801, USA}
\begin{document}

\maketitle

\begin{abstract}
We study quasi-periodic eigenvalue problems that arise in the spectral stability analysis of periodic traveling waves to dispersive Hamiltonian PDEs. Specifically, we establish bounds on regions in the complex plane where eigenvalues may deviate from the imaginary axis, and estimates on the number of such unstable eigenvalues. These results hold when the dispersion relation grows faster than quadratically in the wave number and utilizes a Gershgorin disk theorem argument along with the symmetry of the spectrum. %For quadratic or lower-order growth rates, we adopt an alternative approach based on a second-order perturbation calculation. 
The results are applicable to a broad class of nonlinear dispersive equations, including the generalized Korteweg--de Vries, generalized Benjamin--Bona–-Mahony, and Kawahara equations.
\end{abstract}

\section{Introduction}\label{sec:intro}

We consider spectral problems of the form \begin{equation}\label{eqn:main} 
\lambda v=\mathcal{J}\mathcal{L}v,\qquad \lambda\in \mathbb{C},
\end{equation}
where \(\mathcal{J}\) is a linear skew-symmetric operator, \(\mathcal{L}\) is a linear self-adjoint operator representing the second variation of a suitable Hamiltonian, and \(\mathcal{J}\mathcal{L}\) has periodic coefficients. Such eigenvalue problems arise in the spectral stability analysis of periodic traveling waves to dispersive Hamiltonian partial differential equations (PDEs). Notable examples include the generalized Korteweg--de Vries (gKdV), generalized Benjamin--Ono (gBO), generalized Benjamin--Bona--Mahony (gBBM), and nonlinear Schrödinger equations. 
It is well-known that the spectrum of \eqref{eqn:main} appears in quartets---if \(\lambda\) is an eigenvalue, then so are \(-\lambda\), \(\overline{\lambda}\), and \(-\overline{\lambda}\)---indicating symmetry with respect to reflections across the real and imaginary axes. Our primary focus lies on quasi-periodic eigenvalue problems exhibiting this kind of symmetry, where any eigenvalues deviating from the imaginary axis signify instability.

For solitary waves or periodic traveling waves subject to same-period perturbations, the seminal work of Grillakis, Shatah, and Strauss \cite{grillakis1987stability,grillakis1990stability} established a relation between stability and the number of negative eigenvalues of the second variation of the reduced Hamiltonian, whose spectrum is discrete. This has developed into powerful analytical methodologies, leading to numerous applications. However, for periodic traveling waves under arbitrary perturbations, understanding the essential spectrum is exceedingly challenging. Nevertheless, for some integrable equations, it is possible to analytically confirm---through the Lax pair formalism---that the essential spectrum lies along the imaginary axis \cite{MR2552133,MR2724037,MR2812335, MR3624690}. Additionally, for various non-integrable equations, stability results of small amplitude waves have been established \cite{gallay2007stability,MR2357528, haragus2008spectra, MR2286731, MR2684614, leisman2021stability}. %\cite{kapitula2013spectral,sanford2014stability,johnson2009nonlinear,pava2007nonlinear,trichtchenko2018stability,hakkaev2014linear,carter2019stability,deconinck2011instability}. 
It is noteworthy that periodic traveling waves can become unstable under subharmonic perturbations. In particular, for a broad class of nonlinear dispersive equations, instability results have been established near the origin of the spectral plane for small values of the Floquet exponent, namely modulational instability \cite{MR2660515, Bronski.Johnson.Kapitula.2011, MR3183408, MR3194028, MR3298879, MR3408757, trichtchenko2018stability,stefanov2020small,gallay2007stability,haragus2008spectra,johnson2013stability}.

Significant efforts have been devoted to numerically compute essential spectrum for linearized operators about periodic traveling waves \cite{MR2273379, MR2323328, MR2200793,deconinck2011instability, sanford2014stability, MR4716934,barker2013nonlinear,kalisch2005numerical,angulo2014non,alharbi2020numerical,bona2007numerical}. However, these numerical approaches often rely on the implicit assumption that all unstable spectrum be confined within a bounded region in the complex plane, which warrants careful scrutiny. In fact, the first two authors together with their collaborator \cite{bronski2021superharmonic} demonstrated that this assumption does not hold for some regularized long-wave models, where the spectrum extends towards \(\pm i\infty\) along curves with nonzero real parts. Additionally, it is challenging to numerically distinguish whether spectrum with small but nonzero real part means genuine instability or it is merely an artifact of numerical error. For instance, the second author and her collaborator \cite{MR4600217} analytically confirmed the numerical prediction \cite{deconinck2011instability} of infinitesimally small 'bubbles' of instability for Stokes free-surface waves. 

Our objective is to establish explicit spectral bounds that can be directly utilized in numerical applications for a broad class of dispersive Hamiltonian PDEs. Specifically, we identify regions in the complex plane where eigenvalues potentially deviate from the imaginary axis and estimate the number of such unstable eigenvalues. 

For a fixed value of the Floquet exponent, ensuring a discrete spectrum, we rewrite \eqref{eqn:main} as
\[
\lambda v = \mathcal{JL}_0v + \mathcal{JL}_1v 
\]
or, equivalently,
\[
v = -(\mathcal{JL}_0 - \lambda)^{-1} \mathcal{JL}_1 v,
\]
where $\mathcal{JL}_0$ represents a constant-coefficient dispersion operator and $\mathcal{JL}_1$ has variable coefficients. If one can show that $\|(\mathcal{JL}_0-\lambda)^{-1}\mathcal{JL}_1\|<1$ in an appropriate function space, then $1+(\mathcal{JL}_0-\lambda)^{-1}\mathcal{JL}_1$ is invertible, implying that $\lambda \neq 0, \in \mathbb{C}$ must belong to the resolvent set. We shall demonstrate that this holds except for countably many Gershgorin (topological) disks, centered at purely imaginary eigenvalues of $\mathcal{JL}_0$, with radii depending on $\mathcal{JL}_1$ appropriately. 

Additionally, if there exists an eigenvalue $\lambda$ of \eqref{eqn:main} such that
\[
|\lambda - i\alpha| < \epsilon \quad \text{for some $\alpha \in \mathbb{R}$ and some $\epsilon > 0$}
\]
and if
\[
|\lambda - \lambda'| > 2\epsilon \quad \text{for all eigenvalues $\lambda' \neq \lambda$},
\]
then $\lambda \in i\mathbb{R}$ must hold by Hamiltonian symmetry. In other words, if an eigenvalue is close to the imaginary axis but isolated, then it must be purely imaginary. Consequently, if one can show that each Gershgorin disk contains precisely one eigenvalue and is disjoint from other disks, then the eigenvalue within the disk must be purely imaginary. For several examples, we shall demonstrate this for all eigenvalues for sufficiently large wave numbers, thereby ensuring that only finitely many eigenvalues can lie off the imaginary axis. 

For instance, for the gKdV equation (see \eqref{eqn:gKdV}), the $k$-th Gershgorin disk is centered at $i(k^3-ck)$, representing the dispersion relation, with a radius of $O(k)$, where $k \in \mathbb{R}$ is the wave number. The distance from the $k$-th disk to its adjacent disks is $O(k^2)$, ensuring that the disks remain disjoint from the others for sufficiently large $|k|$. Consequently, the eigenvalues within the disks must be purely imaginary. 

The gBO equation (see \eqref{eqn:BO}) represents a borderline case, where the distance from the $k$-th Gershgorin disk to the adjacent disks is $O(k)$, and the radius of the disk is also $O(k)$. Notably, we shall demonstrate that the Gershgorin disks for the BO equation (quadratic power-law nonlinearity) overlap except potentially at one point. However, numerical experiments suggest that the spectrum is confined to regions significantly smaller than those predicted by the Gershgorin disk theorem argument. %We adopt an alternative approach, employing a second-order perturbation calculation, for improved bounds for regions of unstable eigenvalues. 

The idea of using topological disks to establish spectral bounds traces back to Howard's semi-circle theorem \cite{howard1961note} (see also \cite{banerjee1988reducing,chimonas1970extension,waleffe2019semicircle}), which identifies a region where the complex wave velocity for an unstable mode may reside.
The Gershgorin disk theorem \cite{gerschgorin1931sa,varga2010gervsgorin} has been instrumental for stability analysis in various situations. See, for instance, \cite{FL1989,FL1991}, where the authors established general results on the number and possible locations of eigenvalues of operators in Banach spaces. The key innovation here, which allows us to count the possible dimensions of off-axis eigenvalues, is the incorporation of symmetry. Specifically, if there is one eigenvalue in an isolated disk, then it must necessarily lie on the axis of symmetry. 
Perhaps the work most closely related to ours is the recent study \cite{gaebler2021nls}, where the authors employed semigroup techniques to show that any eigenvalues off the imaginary axis must lie within a strip of fixed width about the real axis. However, this does not provide a means to count the number of such eigenvalues. We discuss the similarities and differences between the two approaches in greater detail in Section \ref{sec:conclusion}.

Our results can help reduce the risk of overlooking unstable eigenvalues in numerical experiments and decrease computational costs by limiting required numerical computations to locate such spectra. More importantly, they represent an important preliminary step towards establishing spectral stability proofs through rigorous numerics for non-integrable PDEs \cite{BBHH}. 

%Recently, Gaebler and Stanislavova \cite{gaebler2021nls} have established related spectral bounds for the gKdV and NLS equations that all instabilities are confined within a horizontal strip in the complex plane containing the origin. An important difference of their methodology from ours is that theirs focuses on the self-adjoint component of the operator. Our methodology does not extend to the NLS equation. However, it provides information about the number of unstable eigenvalues and tighter bounds when, for instance, the Fourier coefficients for the potential are either positive or sign alternating. 

\section{The main result}\label{sec:Gershgorin}

%Employing an argument based on the Gershgorin disk theorem \cite{gerschgorin1931sa,varga2010gervsgorin}, along with the Hamiltonian symmetry of the spectrum, we establish spectral bounds for periodic traveling waves to nonlinear dispersive equations. Specifically, we identify regions in the complex plane where eigenvalues may deviate from the imaginary axis and estimate the number of such unstable eigenvalues. Our results are applicable to a broad class of equations for which the corresponding Gershgorin (topological) disks asymptotically become disjoint as they extend towards $\pm i\infty$. %Some examples are discussed in detail.

%\subsection{The main result}\label{sec:main2}

We consider the eigenvalue problem 
\begin{equation}\label{eqn: General eigenvalue problem}
\lambda v=(i\omega(-i\partial_x)+c\partial_x+\partial_x Q(x))v,
\quad \lambda \in \mathbb{C},
\end{equation}
subject to the boundary condition
\begin{equation}\label{eqn: Floquet quasiperiodic BCs}
v(T)=e^{2 \pi i \mu } v(0),\quad \mu\in(-\tfrac{1}{2},\tfrac{1}{2}],
\end{equation}
along with its derivatives. Here $\omega(k)$, $k\in\mathbb{R}$, is a real-valued function, representing the dispersion relation, and it satisfies $\omega(-k)=-\omega(k)$; $c \in \mathbb{R}$ is the wave speed; and $Q(x)$, $x\in\mathbb{R}$, is a real-valued function, satisfying $Q(x+T)=Q(x)$ for some $T>0$, the period. We assume that $Q(x)$ has zero mean over one period, with the mean absorbed into $c$. We introduce the notation
\[
\|\widehat{Q}\|_1=\sum_{k\neq0,\in\mathbb{Z}} |\widehat{Q}_k|, \quad \text{where}\quad Q(x)=\sum_{k\neq 0,\in\mathbb{Z}} \widehat{Q}_ke^{\frac{2\pi ikx}{T}}.
\]
The quasi-periodic eigenvalue problem \eqref{eqn: General eigenvalue problem}-\eqref{eqn:  Floquet quasiperiodic BCs} arises in the stability analysis of periodic traveling waves to nonlinear dispersive equations. Notable examples include:
\begin{itemize}
\item the generalized KdV equation: $\omega(k)=k^3$,
\item the Benjamin--Ono equation: $\omega(k) =  k^2 \sgn(k)$,
\item the Kawahara equation: $\omega(k) = k^5 + \alpha  k^3$, $\alpha\in\mathbb{R}$.
\end{itemize}

We state the main result of the section. 

\begin{theorem}\label{Thm: General Gershgorin discs}
For each $\mu\in(-\frac{1}{2},\frac{1}{2}]$, a nonzero eigenvalue of \eqref{eqn: General eigenvalue problem}-\eqref{eqn: Floquet quasiperiodic BCs} must be contained in ${\displaystyle \bigcup_{k\in\mathbb{Z}} D_k(\mu)}$, where the $k$-th Gershgorin (topological) disk is defined as 
\begin{equation}\label{eqn: General Gershgorin disc formula}
D_k(\mu)=\Big\{\lambda \in \mathbb{C}:\Big|\lambda-i\omega\Big(\frac{2\pi}{T}(k+\mu)\Big)- \frac{2\pi ic}{T}(k+\mu)\Big| \leq \frac{2\pi}{T}|k+\mu| \|\widehat{Q}\|_1 \Big\}.
\end{equation}
Furthermore, any connected component consisting of $n$ intersecting Gershgorin disks must contain precisely $n$ eigenvalues. In particular, if a Gershgorin disk is disjoint from all other disks, then it contains exactly one eigenvalue.
\end{theorem}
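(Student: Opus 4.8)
The plan is to diagonalize the constant-coefficient part of the operator in a Bloch--Fourier basis, read off the Gershgorin structure from the resulting (infinite) matrix, and then count eigenvalues by continuously deforming the operator to its diagonal part. First I would record the matrix of $A := i\omega(-i\partial_x)+c\partial_x+\partial_x Q(x)$ in the basis $e_k(x):=\exp\big(\tfrac{2\pi i}{T}(k+\mu)x\big)$, $k\in\mathbb{Z}$, each of which satisfies the boundary condition \eqref{eqn: Floquet quasiperiodic BCs}. Since $-i\partial_x e_k=\tfrac{2\pi}{T}(k+\mu)e_k$, the part $\mathcal{JL}_0:=i\omega(-i\partial_x)+c\partial_x$ is diagonal with entries $d_k:=i\omega\big(\tfrac{2\pi}{T}(k+\mu)\big)+\tfrac{2\pi ic}{T}(k+\mu)$, precisely the claimed disk centers. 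Writing $Q=\sum_{m\neq0}\widehat{Q}_m e^{2\pi imx/T}$, one finds $\partial_x(Qe_k)=\sum_j\widehat{Q}_{j-k}\tfrac{2\pi i}{T}(j+\mu)e_j$, so the $(j,k)$ entry of $\mathcal{JL}_1:=\partial_x Q$ is $\widehat{Q}_{j-k}\tfrac{2\pi i}{T}(j+\mu)$, and the $j$-th absolute row sum is $\tfrac{2\pi}{T}|j+\mu|\,\|\widehat{Q}\|_1=:r_j$, the claimed radius.

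For the containment statement I would run the standard Gershgorin argument in this basis. Given an eigenfunction $v=\sum_k\widehat{v}_ke_k\in L^2$ with $\lambda\neq0$, its coefficients lie in $\ell^2\subset c_0$, so the maximum $|\widehat{v}_{k_0}|=\max_k|\widehat{v}_k|>0$ is attained at some $k_0$. Isolating the $k_0$-th equation $(d_{k_0}-\lambda)\widehat{v}_{k_0}=-\sum_{j\neq k_0}(\mathcal{JL}_1)_{k_0j}\widehat{v}_j$, bounding the right-hand side by $|\widehat{v}_{k_0}|\sum_{j\neq k_0}|(\mathcal{JL}_1)_{k_0j}|=|\widehat{v}_{k_0}|\,r_{k_0}$ (finiteness of $\|\widehat{Q}\|_1$ makes the sum converge), and dividing by $|\widehat{v}_{k_0}|$ yields $|\lambda-d_{k_0}|\leq r_{k_0}$, i.e. $\lambda\in D_{k_0}(\mu)$.

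For the counting statement I would homotope to the diagonal operator and use a Riesz spectral projection. Set $A(t):=\mathcal{JL}_0+t\mathcal{JL}_1$, $t\in[0,1]$; applying the containment statement to $A(t)$ (whose perturbation has weight $t\|\widehat{Q}\|_1$) shows $\sigma(A(t))\subseteq\bigcup_k D_k^{(t)}$, where $D_k^{(t)}$ is the disk of radius $tr_k\leq r_k$ about $d_k$, hence $D_k^{(t)}\subseteq D_k(\mu)$. Fix a connected component $C=\bigcup_{k\in I_C}D_k(\mu)$ of $n$ disks disjoint from the rest. Since $C$ is compact and, because $|d_k|\to\infty$, only finitely many disks meet any bounded set, the remaining disks form a closed set at positive distance from $C$, so I can enclose $C$ by a contour $\Gamma$ lying in the resolvent set of every $A(t)$. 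As $\mathcal{JL}_0$ has compact resolvent and $\mathcal{JL}_1$ is relatively compact (one derivative against the faster-growing symbol of $\mathcal{JL}_0$, which makes $(\lambda-\mathcal{JL}_0)^{-1}\mathcal{JL}_1$ Hilbert--Schmidt), each $A(t)$ has discrete spectrum and $\{A(t)\}$ is a holomorphic family of type (A). The projection $P(t)=\tfrac{1}{2\pi i}\oint_\Gamma(\lambda-A(t))^{-1}\,d\lambda$ is therefore finite rank and norm-continuous in $t$, so its rank---the number of eigenvalues inside $\Gamma$ counted with algebraic multiplicity---is constant. At $t=0$ the eigenvalues of the diagonal $\mathcal{JL}_0$ inside $\Gamma$ are exactly $\{d_k:k\in I_C\}$, of total multiplicity $n$; hence $C$ contains precisely $n$ eigenvalues of $A=A(1)$, and the case $n=1$ is the final assertion.

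I expect the main obstacle to be the operator-theoretic justification in the last paragraph rather than the algebra: verifying that $\mathcal{JL}_1$ is relatively compact so that each $A(t)$ truly has discrete spectrum, that $\Gamma$ stays off $\sigma(A(t))$ uniformly in $t$, and that $P(t)$ is finite rank and continuous. This is exactly the point at which the growth rate of $\omega$ must be invoked, guaranteeing $|d_k|\to\infty$ quickly enough that $(\lambda-\mathcal{JL}_0)^{-1}\mathcal{JL}_1$ is compact and that only finitely many disks cluster in any bounded region.
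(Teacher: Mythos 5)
Your proposal is correct and follows essentially the same route as the paper: the containment statement is the Gershgorin row-sum estimate in the Bloch--Fourier basis (the paper phrases it as a resolvent bound $\|(\lambda-\mathcal{JL}_0)^{-1}\mathcal{JL}_1\|<1$ computed via the maximum of the Fourier coefficients, you phrase it as the classical maximal-component argument, but the resulting inequality is identical), and the counting statement uses the same homotopy $\tau\,\partial_x Q$ down to the diagonal constant-coefficient operator, which the paper justifies by citing Kato's perturbation theory under a relative-compactness hypothesis and you justify with the equivalent Riesz-projection rank-constancy argument. No substantive differences.
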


\begin{proof}
For a fixed value of $\mu\in(-\frac{1}{2},\frac{1}{2}]$, ensuring that the spectrum of \eqref{eqn: General eigenvalue problem}-\eqref{eqn: Floquet quasiperiodic BCs} is discrete, we rewrite \eqref{eqn: General eigenvalue problem} as
\[
v=(\lambda-i \omega(-i \partial_x)-c\partial_x)^{-1}\partial_x(Q(x)v).
\]
If one can demonstrate that $\|(\lambda-i\omega(-i\partial_x)-c\partial_x)^{-1}\partial_xQ(x)\|<1$ in an appropriate function space, then $1-(\lambda-i\omega(-i\partial_x)-c\partial_x)^{-1}\partial_xQ(x)$ is invertible, implying that $\lambda\neq 0,\in\mathbb{C}$ must belong to the resolvent set of $i\omega(-i\partial_x)+c\partial_x+\partial_x Q(x)$. 

We expand $v$ in a Fourier series, leading to
\[
\sum_{k\in\mathbb{Z}} \widehat{v}_k e^{\frac{2 \pi i (k+\mu) x}{T}}=(\lambda-i\omega(-i\partial_x)-c\partial_x)^{-1}\partial_x\Big(\sum_{k\in\mathbb{Z}} \sum_{\ell\in\mathbb{Z}} \widehat{Q}_k \widehat{v}_\ell e^{\frac{2 \pi i (k+\ell+\mu)x}{T}} \Big).
\]
A straightforward calculation reveals
\begin{equation}\label{eqn:v2}
\hat{v}_k=\frac{\frac{2\pi i}{T}(k+\mu)}{\lambda-i\omega\big( \frac{2\pi}{T} (k+\mu)\big)- \frac{2\pi ic}{T}(k+\mu)} \sum_{\ell\in\mathbb{Z}} \widehat{Q}_{k-\ell}\widehat{v}_\ell,
\end{equation}
whence
\[
\max_{k\in\mathbb{Z}}|\widehat{v}_k| \leq \max_{k\in\mathbb{Z}}\left|\frac{\frac{2\pi i}{T}(k+\mu)}{\lambda-i\omega\big( \frac{2\pi}{T} (k+\mu)\big)- \frac{2\pi ic}{T}(k+\mu)} \right|\max_{\ell\in\mathbb{Z}} |\widehat{v}_\ell|\sum_{\ell\in\mathbb{Z}} |\widehat{Q}_{k-\ell}|.
\]
Consequently, if \eqref{eqn:v2} has a nontrivial solution then, necessarily,
\begin{equation}
1\leq \max_{k\in\mathbb{Z}}\left|\frac{\frac{2\pi i}{T}(k+\mu)}{\lambda-i\omega\big( \frac{2\pi}{T} (k+\mu)\big)- \frac{2\pi ic}{T}(k+\mu)} \right|\sum_{\ell\in\mathbb{Z}} |\widehat{Q}_{k-\ell}|,
\label{eqn:MainG}
\end{equation}
which gives the region exterior to the Gershgorin (topological) disks, centered at $i\omega(\frac{2\pi}{T} (k+\mu))+\frac{2\pi ic}{T} (k+\mu)$---along the imaginary axis---with radii $\frac{2\pi}{T}|k+\mu| \|\widehat Q\|_1$, where $k\in\mathbb{Z}$. That implies that any $\lambda\neq 0,\in\mathbb{C}$ outside the union of these disks must belong to the resolvent set of $i\omega(-i\partial_x)+c\partial_x+\partial_x Q(x)$. Therefore, the eigenvalues of \eqref{eqn: General eigenvalue problem}-\eqref{eqn: Floquet quasiperiodic BCs} must lie in $\bigcup_{k\in\mathbb{Z}} D_k(\mu)$, where $D_k(\mu)$ is in \eqref{eqn: General Gershgorin disc formula}.

To see that a connected component of $n$ intersecting Gershgorin disks contains precisely $n$ eigenvalues, we consider 
\begin{equation}\label{eqn:tau}
\lambda v =i\omega(-i\partial_x)v + cv_x + \tau(Q(x)v)_x, \quad  \tau\in[0,1].
\end{equation}
It follows from perturbation theory (see, for instance, \cite{Kato}) that under some assumptions\footnote{It suffices to assume that the potential term $\partial_x Q(x)$ be a relatively compact perturbation of the dispersion operator $i \omega(-i\partial_x)$. This necessitates some growth condition on $\omega(k)$ for $|k|\gg1$, which is satisfied in all examples considered herein.}, the eigenvalues of \eqref{eqn:tau} vary continuously with respect to $\tau$. When $\tau=0$, the eigenvalues of \eqref{eqn:tau} are $i\omega(\frac{2\pi}{T} (k+\mu))+\frac{2\pi ic}{T} (k+\mu)$, where $k\in\mathbb{Z}$, with precisely one eigenvalue per disk. Therefore, when $\tau=1$, there must be exactly $n$ eigenvalues in the union of the $n$ Gershgorin disks. This completes the proof. We remark that there are $n$ eigenvalues in the connected component of $n$ intersecting disks, but not necessarily one eigenvalue per disk unless the disks are disjoint. 
\end{proof}

We turn to establishing a sufficient condition ensuring that the Gershgorin disks $D_k(\mu)$ become asymptotically disjoint as $|k|\to \infty$, thereby offering an upper bound on the number of overlapping disks and, consequently, an upper bound on the number of eigenvalues which may deviate from the imaginary axis. 

\begin{theorem}\label{Thm: General disjoint disc condition}
For each $\mu\in(-\frac{1}{2},\frac{1}{2}]$, if
\begin{equation}\label{eqn: General disjoint disc condition}
\begin{aligned}
\Big|\omega\Big(\frac{2\pi}{T} (k+1+\mu)\Big)
+\frac{2\pi c}{T}(k+1+\mu)
&-\omega\Big(\frac{2\pi}{T} (k+\mu)\Big)-\frac{2\pi c}{T}(k+\mu)\Big| \\
&\qquad>\frac{2\pi}{T}(|k+1+\mu|+|k+\mu|)\|\widehat Q\|_1 
\end{aligned}
\end{equation}
for all $k\in\mathbb{Z}$ such that $k>k^\ast$ or $k<k_\ast$ for some $k^\ast$, $k_\ast \in \mathbb{Z}$, then the eigenvalues of \eqref{eqn: General eigenvalue problem}-\eqref{eqn: Floquet quasiperiodic BCs} lying outside of ${\displaystyle \Big(\bigcup_{k=0}^{k^\ast} D_k(\mu)\Big)\bigcup \Big(\bigcup_{k=k_\ast}^{0} D_k(\mu)\Big)}$ must be purely imaginary. 
\end{theorem}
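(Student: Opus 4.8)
The plan is to promote the counting statement of Theorem~\ref{Thm: General Gershgorin discs} to a statement about reflection symmetry. The key structural observation is that every disk $D_k(\mu)$ is centered at the purely imaginary point $i\big(\omega(\tfrac{2\pi}{T}(k+\mu))+\tfrac{2\pi c}{T}(k+\mu)\big)$ and has a real radius, hence is invariant under the reflection $\lambda\mapsto-\overline{\lambda}$ across the imaginary axis; and the spectrum is invariant under the very same reflection, since it appears in quartets. Therefore it will suffice to show that each eigenvalue lying outside the central cluster is trapped in a Gershgorin disk that is \emph{disjoint from every other disk}: by Theorem~\ref{Thm: General Gershgorin discs} such an isolated disk contains exactly one eigenvalue $\lambda_0$, whereupon $-\overline{\lambda_0}$ is also an eigenvalue and also lies in the (reflection-invariant) disk, forcing $-\overline{\lambda_0}=\lambda_0$, i.e. $\mathrm{Re}\,\lambda_0=0$.

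First I would write $h_k=\omega(\tfrac{2\pi}{T}(k+\mu))+\tfrac{2\pi c}{T}(k+\mu)$ for the real height of the $k$-th center and $r_k=\tfrac{2\pi}{T}|k+\mu|\,\|\widehat{Q}\|_1$ for its radius, so that hypothesis \eqref{eqn: General disjoint disc condition} is exactly $|h_{k+1}-h_k|>r_{k+1}+r_k$ for $k>k^\ast$ or $k<k_\ast$; this is precisely disjointness of \emph{consecutive} disks in those ranges. The main task is to upgrade this to disjointness of \emph{all} pairs, and the device for doing so is monotonicity of the centers for large $|k|$. Because $\omega$ grows faster than linearly, $\omega'(\tfrac{2\pi}{T}(k+\mu))+c>0$ for all sufficiently large $k$ and is negative for all sufficiently small $k$, so (after enlarging $k^\ast$, $k_\ast$ if necessary) the sequence $h_k$ is strictly increasing for $k>k^\ast$ and strictly decreasing for $k<k_\ast$.

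Granting monotonicity, a telescoping estimate closes the argument: for $k^\ast<k<j$ one has $h_j-h_k=\sum_{m=k}^{j-1}(h_{m+1}-h_m)>\sum_{m=k}^{j-1}(r_{m+1}+r_m)\geq r_k+r_j$, so $D_k(\mu)$ and $D_j(\mu)$ are disjoint; the analogous estimate handles $j<k<k_\ast$. Hence every disk in the two outer ranges is disjoint from all others. Now any eigenvalue lying outside $\big(\bigcup_{k=0}^{k^\ast}D_k(\mu)\big)\cup\big(\bigcup_{k=k_\ast}^{0}D_k(\mu)\big)$ lies, by Theorem~\ref{Thm: General Gershgorin discs}, in some $D_j(\mu)$ with $j>k^\ast$ or $j<k_\ast$; this disk is isolated, hence contains exactly one eigenvalue, which by the symmetry argument of the first paragraph is purely imaginary.

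The step I expect to be the main obstacle is exactly the passage from consecutive to global disjointness. Monotonicity of $h_k$ is where the superlinear growth of $\omega$ is genuinely used, and one must also ensure that the \emph{first} outer disks $D_{k^\ast+1}(\mu)$ and $D_{k_\ast-1}(\mu)$ are separated from the central cluster, not merely from their outer neighbors—the boundary index $k=k^\ast$ (resp. $k=k_\ast$) is not covered by \eqref{eqn: General disjoint disc condition}. This is resolved by observing that once the condition holds for all $k>k^\ast$, the superlinear gap $h_{k^\ast+1}-h_{k^\ast}$ dominates $r_{k^\ast+1}+r_{k^\ast}$ for $k^\ast$ large, so one may simply enlarge $k^\ast$ and $k_\ast$ (harmless, since \eqref{eqn: General disjoint disc condition} persists for larger $|k|$) until these disks are isolated as well. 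Everything else reduces to bookkeeping with the telescoping inequality and the reflection symmetry.
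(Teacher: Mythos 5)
Your proposal is correct and follows essentially the same route as the paper's proof: invoke Theorem~\ref{Thm: General Gershgorin discs} to trap each outer eigenvalue in an isolated disk containing exactly one eigenvalue, then use the quartet symmetry $\lambda\mapsto-\overline{\lambda}$ together with the fact that the disks are centered on the imaginary axis to force that eigenvalue onto $i\mathbb{R}$. The one place you go beyond the paper is the passage from disjointness of \emph{consecutive} disks (which is all that \eqref{eqn: General disjoint disc condition} literally gives) to disjointness from \emph{all} other disks; the paper simply asserts this, whereas your monotonicity-plus-telescoping argument, and your remark that the boundary indices $k^\ast$, $k_\ast$ may need to be enlarged, make explicit the superlinear-growth hypothesis on $\omega$ that the paper only records in the surrounding remarks.
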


\begin{proof}
Recall from Theorem~\ref{Thm: General Gershgorin discs} that any nonzero eigenvalue of \eqref{eqn: General eigenvalue problem}-\eqref{eqn: Floquet quasiperiodic BCs} must be contained in the union of the Gershgorin disks $\bigcup_{k\in\mathbb{Z}} D_k(\mu)$. We observe that \eqref{eqn: General disjoint disc condition} indicates that the distance between the centers of the $k$-th and $(k+1)$-st disks is greater than the sum of their radii, ensuring that the disks are disjoint. Additionally, recall from Theorem~\ref{Thm: General Gershgorin discs} that each Gershgorin disk that is disjoint from all other disks contains exactly one eigenvalue, which must be purely imaginary by Hamiltonian symmetry. Indeed, if $\lambda$ is an eigenvalue, then so is $-\overline{\lambda}$, and the disks are centered on the imaginary axis. This completes the proof.
\end{proof}

\begin{remark*}\rm
%Some important observations can be made regarding this result.
The operator $i \omega(-i\partial_x) + c \partial_x$ represents the dispersion relation, so that $\omega'(k) + c$ corresponds to the group velocity, while the operator $\partial_xQ(x)$ accounts for nonlinear effects. Theorem \ref{Thm: General disjoint disc condition} thus ensures stability when group velocity dispersion dominates nonlinearity. 

In particular, if $\omega'(k)+c>\omega_0>0$ for some $\omega_0$---that is, the dispersion relation is monotonically increasing with respect to the wave number---and if $\omega'(k)$ increases faster than linearly for sufficiently large $|k|$, then {\em all Gershgorin disks are disjoint} for sufficiently small $\|\widehat Q\|_1$. Conversely, if the dispersion relation is not monotone with the wave number, then there will typically be at least a finite number of overlapping Gershgorin disks. This is related to the idea of Krein signature, where the eigenvalues associated with portions of the dispersion curve that `goes the wrong way' can be interpreted as having negative Krein signature, potentially leading to instability. 

In what follows, we may restrict our attention to the case where $k>0$ as the case of $k<0$ can be treated analogously. In many examples of interest, the dispersion relation is odd with respect to the wave number, and the results exhibit antisymmetry. For instance $k_* = -k^*$. 

Additionally, it is convenient to work with $\kappa = \frac{2\pi k}{T}$ rather than $k$.
\end{remark*}

For numerical applications, achieving the tightest possible bound may not always be necessary; instead, having a readily available bound that is easy to work with can be more practical. We give a simpler sufficient condition ensuring that the Gershgorin disks become asymptotically disjoint.
%We provide some sufficient conditions for the Gershgorin disks to be disjoint. While these conditions are not the tightest possible, they are easily stated. 

\begin{corollary}\label{cor: General disjoint disc condition}
Suppose that $\omega'(k),\omega''(k),\omega'''(k)\geq 0$ for all $k\in\mathbb{R}$. If
\begin{equation}
\omega'(\kappa) + c \geq 2\kappa \|\widehat Q\|_1
\end{equation}
for all $\kappa \geq \kappa^*$, where $\kappa=\frac{2\pi k}{T}$ and $\kappa^*=\frac{2\pi k^*}{T}$ for some $k^*>0,\in\mathbb{Z}$, then $D_k(\mu)$ are disjoint from all other Gershgorin disks provided $|k| \geq k^*+1$ for all $\mu \in (-\tfrac12,\tfrac12]$.

In particular, if $\omega(k)=O(|k|^d)$ as $|k|\to\infty$ for some $d>2$, then $D_k(\mu)$ are disjoint for $|k|>k^*$ for $\mu \in (-\tfrac12,\tfrac12]$, where 
\begin{equation}\label{eqn:kBound}
k^*=\begin{cases}
{\displaystyle \Big(\frac{T}{2\pi}\Big)^{\frac{d-1}{d-2}}
\Big(\frac{2 \|\widehat Q\|_1-c}{d}\Big)^{\frac{1}{d-2}} }, &c<0\\
{\displaystyle \Big(\frac{T}{2\pi}\Big)^{\frac{d-1}{d-2}}\Big(\frac{2 \|\widehat Q\|_1}{d}\Big)^{\frac{1}{d-2}}}, &c>0
\end{cases}
\end{equation}
whence 
\[
(\text{$\#$ of unstable eigenvalues})\leq 2 k^*
\]
for each $\mu\in(-\frac12,\frac12]$.
\end{corollary}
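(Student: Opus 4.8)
The plan is to prove Corollary~\ref{cor: General disjoint disc condition} in two stages. First I would reduce the disjointness requirement to the simpler monotonicity-based condition, and second I would use the growth hypothesis $\omega(k)=O(|k|^d)$ with $d>2$ to solve explicitly for the threshold $k^*$. Throughout, by the Remark I restrict to $k>0$; the case $k<0$ follows by the stated antisymmetry.

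For the first stage, the goal is to show that the hypothesis $\omega'(\kappa)+c\geq 2\kappa\|\widehat Q\|_1$ (for $\kappa\geq\kappa^*$) implies the disjointness condition \eqref{eqn: General disjoint disc condition} of Theorem~\ref{Thm: General disjoint disc condition}. I would rewrite the left-hand side of \eqref{eqn: General disjoint disc condition}, the distance between adjacent disk centers, using the Mean Value Theorem: the difference $\omega(\frac{2\pi}{T}(k+1+\mu))-\omega(\frac{2\pi}{T}(k+\mu))$ equals $\frac{2\pi}{T}\omega'(\xi)$ for some $\xi\in(\frac{2\pi}{T}(k+\mu),\frac{2\pi}{T}(k+1+\mu))$, and the linear term contributes $\frac{2\pi c}{T}$ exactly. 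Since $\omega''\geq 0$ makes $\omega'$ nondecreasing, I can bound $\omega'(\xi)+c$ below by $\omega'(\kappa)+c$ where $\kappa=\frac{2\pi k}{T}$ is the left endpoint's integer part, and then invoke the hypothesis to get $\omega'(\xi)+c\geq 2\kappa\|\widehat Q\|_1$. For the right-hand side of \eqref{eqn: General disjoint disc condition}, the sum of radii $\frac{2\pi}{T}(|k+1+\mu|+|k+\mu|)\|\widehat Q\|_1$, I would bound the two Floquet-shifted magnitudes crudely in terms of $\kappa$; here the sign conditions on $\mu\in(-\tfrac12,\tfrac12]$ let me control $|k+\mu|$ and $|k+1+\mu|$ by something comparable to $\frac{2\pi}{T}\cdot 2(k+1)$, and matching factors of $\frac{2\pi}{T}$ should reconcile with the factor of $2\kappa$ in the hypothesis. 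The threshold shift from $k^*$ to $k^*+1$ in the conclusion is precisely the slack absorbed by this Floquet offset.

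For the second stage I would produce the explicit $k^*$ in \eqref{eqn:kBound}. Using $\omega(k)=O(|k|^d)$, the derivative satisfies $\omega'(\kappa)\sim d\,\kappa^{d-1}$ for large $\kappa$, so the sufficient inequality $\omega'(\kappa)+c\geq 2\kappa\|\widehat Q\|_1$ becomes, to leading order, $d\,\kappa^{d-1}+c\geq 2\kappa\|\widehat Q\|_1$, i.e. $d\,\kappa^{d-2}\geq 2\|\widehat Q\|_1-c\kappa^{-1}$. Solving $d\,\kappa^{d-2}\geq 2\|\widehat Q\|_1-c$ in the $c<0$ case (where $-c>0$ augments the right side) and $d\,\kappa^{d-2}\geq 2\|\widehat Q\|_1$ in the $c>0$ case (dropping the helpful $+c$ term for a clean bound) gives $\kappa^{d-2}\geq(2\|\widehat Q\|_1-c)/d$ or $(2\|\widehat Q\|_1)/d$ respectively. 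Translating back via $\kappa^*=\frac{2\pi}{T}k^*$ and raising to the power $\frac{1}{d-2}$ yields the stated formula for $k^*$ after collecting the $\frac{T}{2\pi}$ factors into the exponent $\frac{d-1}{d-2}$. Finally, the eigenvalue count follows by combining with Theorem~\ref{Thm: General disjoint disc condition}: disks with $|k|>k^*$ are disjoint and hence each contains a single purely imaginary eigenvalue, so the only disks that may harbor off-axis eigenvalues are those with $|k|\leq k^*$, numbering at most $2k^*$ across positive and negative indices.

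The main obstacle I anticipate is making the first-stage inequality chain fully rigorous in the presence of the Floquet shift $\mu$ and the asymmetry of $|k+\mu|$ versus $|k+1+\mu|$. The Mean Value Theorem gives control at an interior point $\xi$, but the hypothesis is stated at the lattice point $\kappa=\frac{2\pi k}{T}$, so I must use $\omega''\geq 0$ carefully to ensure the lower bound transfers in the correct direction, and I must verify that the crude bound on the sum of radii does not overshoot the $2\kappa$ coefficient once the $\frac{2\pi}{T}$ factors and the $+1$ shift are accounted for. The role of $\omega'''\geq 0$ is the subtler point: it is presumably invoked to guarantee that $\omega'$ is convex so that the leading-order replacement $\omega'(\kappa)\approx d\kappa^{d-1}$ is a genuine lower bound rather than merely asymptotic, which is what legitimizes the explicit (non-asymptotic) threshold in \eqref{eqn:kBound}. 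Pinning down exactly where convexity of $\omega'$ is needed, versus mere monotonicity, is the step I would scrutinize most.
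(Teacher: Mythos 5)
Your second stage (solving $\omega'(\kappa)+c\geq 2\kappa\|\widehat Q\|_1$ for a power-law dispersion to extract \eqref{eqn:kBound}) and the final eigenvalue count via Theorem~\ref{Thm: General disjoint disc condition} match the paper. The gap is in your first stage, and it is exactly the point you flagged. Writing the difference of adjacent centers as $\int_{\frac{2\pi}{T}(k+\mu)}^{\frac{2\pi}{T}(k+1+\mu)}(\omega'(\kappa)+c)\,d\kappa$ and dropping to the left endpoint (via the Mean Value Theorem plus monotonicity from $\omega''\geq 0$) gives the lower bound $\frac{2\pi}{T}\bigl(\omega'(\kappa_{\mathrm{left}})+c\bigr)$ with $\kappa_{\mathrm{left}}=\frac{2\pi}{T}(k+\mu)$, and the hypothesis then yields at best $\frac{2\pi}{T}\cdot 2\kappa_{\mathrm{left}}\|\widehat Q\|_1$. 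But the sum of the two radii in \eqref{eqn: General disjoint disc condition} is $\frac{2\pi}{T}\bigl(|k+1+\mu|+|k+\mu|\bigr)\|\widehat Q\|_1=\frac{2\pi}{T}\cdot 2\kappa_{\mathrm{mid}}\|\widehat Q\|_1$, where $\kappa_{\mathrm{mid}}=\frac{2\pi}{T}(k+\tfrac12+\mu)$ is the \emph{midpoint} of the interval; this strictly exceeds what the left-endpoint bound delivers, by $\frac{2\pi}{T}\|\widehat Q\|_1$. Shifting the threshold from $k^*$ to $k^*+1$ does not recover this deficit, because the hypothesis is a pointwise statement about $\omega'$ and gives no quantitative growth of $\omega'$ between consecutive lattice points.

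The paper's resolution is Jensen's inequality (the midpoint rule for convex integrands): since $\omega'''\geq 0$ makes $\omega'(\kappa)+c$ convex, the integral is bounded below by its value at the midpoint, and there the reduction is \emph{exact} --- the required inequality becomes precisely $\omega'(\kappa_{\mathrm{mid}})+c\geq 2\kappa_{\mathrm{mid}}\|\widehat Q\|_1$, which is the hypothesis applied at $\kappa=\kappa_{\mathrm{mid}}\geq\kappa^*$ (valid for every $k\geq k^*$ since $\mu>-\tfrac12$). This is also where $\omega'''\geq 0$ is actually used --- not, as you conjectured, to legitimize the power-law lower bound on $\omega'$ in the second stage, which needs only monotonicity. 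Replacing your left-endpoint/MVT step with the midpoint/Jensen step closes the argument; the rest of your outline then goes through as the paper's does.
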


\begin{proof}
Recall Jensen's inequality: if $f''(y)>0$ over the interval $(a,b)$ then  
\[
\int_a^b f(y)~dy>f\Big(\frac{a+b}{2}\Big).
\]
Consequently,
\begin{multline*}
\omega\Big(\frac{2 \pi}{T}(k+1+\mu)\Big) + \frac{2\pi c}{T}(k+1+\mu) 
- \omega\Big(\frac{2 \pi}{T}(k+\mu)\Big) - \frac{2 \pi c}{T}(k+\mu) \\
= \int_{\frac{2 \pi}{T}(k+\mu)}^{\frac{2 \pi}{T}(k+\mu+1)} (\omega'(\kappa) +c)~d\kappa 
\geq \omega'\Big(\frac{2 \pi }{T}(k+\tfrac{1}{2}+\mu)\Big) + c.
\end{multline*}
Recall Theorem \ref{Thm: General disjoint disc condition}, and it suffices to show that 
\[
\omega'\Big(\frac{2 \pi}{T}(k+\tfrac12+\mu)\Big) + c \geq \frac{4 \pi}{T}(k+\tfrac12+\mu)\|\widehat Q\|_1.
\]
Let $\kappa=\frac{2 \pi }{T}(k+\mu+\frac12)$ and observe that $\mu+\frac12>0$. We can bound $-c + 2\kappa \| \widehat Q\|_1$ for simplicity by twice the larger term, and \eqref{eqn:kBound} follows. This completes the proof. 
\end{proof}

\section{Examples}

We apply Theorems~\ref{Thm: General Gershgorin discs} and \ref{Thm: General disjoint disc condition} to gKdV, Kawahara, and BBM equations.

\subsection{The gKdV equation}

We begin by considering the spectral problem for the gKdV equation
\begin{equation}\label{eqn:gKdV}
\lambda v=(-\partial_{xxx}+c\partial_x+\partial_xQ(x))v, \quad \lambda\in\mathbb{C},
\end{equation}
where $c\in\mathbb{R}$ is the wave speed, and $Q(x)$ is a real-valued function satisfying $Q(x+T)=Q(x)$ for some $T>0$, the period. Without loss of generality, we assume $\int_0^T Q(x)~dx=0$, absorbing the mean value of $Q(x)$ into $c$. 

For each $\mu\in(-\frac12,\frac12]$, the $k$-th Gershgorin disk for \eqref{eqn:gKdV} and \eqref{eqn: Floquet quasiperiodic BCs} is defined as 
\[
D_k(\mu)=\Big\{\lambda\in\mathbb{C} : \Big| \lambda-i\Big(\frac{2\pi}{T}\Big)^3(k+\mu)^3-\frac{2\pi ic}{T}(k+\mu)\Big| 
\leq \frac{2\pi }{T}|k+\mu|\|\widehat Q\|_1 \Big\},\quad k\in\mathbb{Z}.
\]
See \eqref{eqn: General Gershgorin disc formula}, where the dispersion relation is $\omega(k)=k^3+ck$. It follows from Theorem \ref{Thm: General Gershgorin discs} that for each $\mu\in(-\frac12,\frac12]$, the eigenvalues of \eqref{eqn:gKdV} and \eqref{eqn: Floquet quasiperiodic BCs} lie in the union of the Gershgorin disks $\bigcup_{k\in\mathbb{Z}} D_k(\mu)$. Furthermore, any connected component of $n$ intersecting Gershgorin disks contains precisely $n$ eigenvalues, and if a Gershgorin disk is disjoint from all others, then it contains exactly one eigenvalue. In particular, for sufficiently large $|k|$, the Gershgorin disks become disjoint. Indeed, $D_k(\mu)$ is centered at $i(\frac{2\pi }{T})^3(k+\mu)^3+\frac{2\pi ic}{T}(k+\mu)=i\frac{8\pi^3}{T^3}k^3+O(k^2)$ as $|k|\to\infty$, whence the distance between the centers of the adjacent disks is $\frac{24\pi^3}{T^3}k^2+O(k)$ as $|k|\to \infty$, while the radius of $D_k(\mu)$ is $\frac{2\pi }{T}|k+\mu|\|\widehat Q\|_1$.

\begin{lemma}\label{Lemma:KdVdisjointcondition}
For each $\mu\in(-\frac12,\frac12]$, the $k$-th Gershgorin disk for \eqref{eqn:gKdV} and \eqref{eqn: Floquet quasiperiodic BCs} is disjoint from all others, provided $|k|>k^*$, where  
\begin{equation}\label{eqn:k*(gKdV)}
k^*=
\begin{cases}
{\displaystyle \Big(\frac{T}{2\pi}\Big)^2\frac{2 \|\widehat Q\|_1-c}{d}}, &c<0\\
{\displaystyle \Big(\frac{T}{2\pi}\Big)^2\frac{2 \|\widehat Q\|_1}{d}}, &c>0,
\end{cases}
\end{equation}
implying that the number of eigenvalues which may deviate from the imaginary axis $\leq 2k^*$.

In particular, {\em all} Gershgorin disks become disjoint from others, so that all eigenvalues are confined to the imaginary axis, provided
\begin{equation}\label{eqn:Q(gKdV)}
\|\widehat Q\|_1^2 < 3 c \Big(\frac{2\pi}{T}\Big)^2.
\end{equation}
\end{lemma}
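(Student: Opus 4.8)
The plan is to treat the two assertions separately, deriving both from the general machinery already in place. The first assertion---disjointness for $|k|>k^\ast$ with $k^\ast$ as in \eqref{eqn:k*(gKdV)}, and hence at most $2k^\ast$ off-axis eigenvalues---I would obtain as a direct specialization of Corollary~\ref{cor: General disjoint disc condition}. For gKdV one has $\omega(k)=k^3$, so that $\omega'(k)=3k^2$, $\omega''(k)=6k$, and $\omega'''(k)=6$ are all nonnegative on $[0,\infty)$; thus the convexity hypotheses hold, the case $k<0$ being handled by the antisymmetry noted in the Remark. Reading off $d=3$ in \eqref{eqn:kBound} gives $\tfrac{d-1}{d-2}=2$ and $\tfrac{1}{d-2}=1$, which reproduces \eqref{eqn:k*(gKdV)} and the count $\leq 2k^\ast$.

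For the second assertion I would prove the stronger statement that \emph{every} consecutive pair of disks is disjoint under \eqref{eqn:Q(gKdV)}, so that the excluded index range in Theorem~\ref{Thm: General disjoint disc condition} is empty. First note that \eqref{eqn:Q(gKdV)} forces $c>0$; consequently the center heights $g(k):=\big(\tfrac{2\pi}{T}\big)^3(k+\mu)^3+\tfrac{2\pi c}{T}(k+\mu)$ are strictly increasing in $k$, since their $k$-derivative equals $\tfrac{2\pi}{T}(3\kappa^2+c)>0$ with $\kappa=\tfrac{2\pi}{T}(k+\mu)$. Because all centers lie on the imaginary axis in this monotone order, pairwise disjointness of all disks reduces to disjointness of \emph{adjacent} disks: if the gap $g(k+1)-g(k)$ exceeds the sum of the two radii $r_k+r_{k+1}$, with $r_k=\tfrac{2\pi}{T}|k+\mu|\,\|\widehat Q\|_1$, for every $k$, then any two disks are separated a fortiori. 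So it suffices to verify condition \eqref{eqn: General disjoint disc condition} for all $k\in\mathbb{Z}$.

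To do so I would compute the gap exactly. Writing $h=\tfrac{2\pi}{T}$ and $m=\tfrac{2\pi}{T}(k+\tfrac12+\mu)$ for the midpoint wavenumber, the identity $a^3-b^3=(a-b)(a^2+ab+b^2)$ with $a=\kappa_{k+1}=m+\tfrac12 h$ and $b=\kappa_k=m-\tfrac12 h$ gives $g(k+1)-g(k)=h\big(3m^2+\tfrac14 h^2+c\big)$, while $r_k+r_{k+1}=\big(|\kappa_k|+|\kappa_{k+1}|\big)\|\widehat Q\|_1$. For every pair whose centers lie on the same side of the origin one has $|\kappa_k|+|\kappa_{k+1}|=2|m|$, so the separation condition becomes positivity of the upward-opening quadratic $3h\,|m|^2-2\|\widehat Q\|_1\,|m|+h\big(c+\tfrac14 h^2\big)$; requiring its discriminant to be negative yields $\|\widehat Q\|_1^2<3ch^2+\tfrac34 h^4$, which after discarding the favorable term $\tfrac34 h^4$ reduces to \eqref{eqn:Q(gKdV)}.

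The hard part will be the single consecutive pair whose centers straddle the origin, i.e.\ the pair on which the cubic ``turns,'' attained at the zone edge $\mu=\tfrac12$ with $k=-1,0$. There the two radii no longer add to $2|m|\|\widehat Q\|_1$ but to $h\|\widehat Q\|_1$, so the clean discriminant argument does not apply and the separation condition instead reads $3m^2+c+\tfrac14 h^2>\|\widehat Q\|_1$. Its worst case $m=0$ gives the genuinely different requirement $\|\widehat Q\|_1<c+\tfrac14 h^2$, which must be reconciled with \eqref{eqn:Q(gKdV)}; I would either absorb it into the stated hypothesis or check it directly on the relevant parameter range. Dispatching this turning-point pair---rather than the generic pairs, which the discriminant handles at once---is the crux. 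Once all consecutive pairs are separated, Theorem~\ref{Thm: General disjoint disc condition} with empty excluded range places every nonzero eigenvalue in a disjoint disk, and Hamiltonian symmetry $\lambda\mapsto-\overline{\lambda}$ about the imaginary axis forces each such eigenvalue to be purely imaginary.
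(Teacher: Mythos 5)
Your proposal follows the paper's own proof almost exactly. The first assertion is obtained, as in the paper, by reading off Corollary~\ref{cor: General disjoint disc condition} with $d=3$; the second is the same discriminant argument: the paper uses $(y+1)^3-y^3>3(y+\tfrac12)^2$ to reduce adjacent-disk separation to positivity of the quadratic $3\big(\tfrac{2\pi}{T}\big)^3(y+\tfrac12)^2-\tfrac{4\pi}{T}\|\widehat Q\|_1(y+\tfrac12)+\tfrac{2\pi c}{T}$ in $(y+\tfrac12)$, whose discriminant is negative precisely under \eqref{eqn:Q(gKdV)}. Your telescoping observation that adjacent separation of collinear intervals implies pairwise separation is used implicitly by the paper, and your exact midpoint computation keeping the $\tfrac14 h^2$ term is the same estimate with the favorable constant retained rather than discarded at the outset.

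The one place you go beyond the paper is the straddling pair, and your instinct there is correct and worth making precise. The identification $|k+1+\mu|+|k+\mu|=2(y+\tfrac12)$ underlying the quadratic is valid only when $k+\mu$ and $k+1+\mu$ have the same sign; for the single pair straddling the origin the sum of radii is $\tfrac{2\pi}{T}\|\widehat Q\|_1$, while the gap between centers is $\big(\tfrac{2\pi}{T}\big)^3\big(\tfrac14+3(y+\tfrac12)^2\big)+\tfrac{2\pi c}{T}$, so the separate condition $\|\widehat Q\|_1<c+\tfrac{\pi^2}{T^2}$ is needed in the worst case $\mu=\tfrac12$, $k=-1$. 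This condition is \emph{not} implied by \eqref{eqn:Q(gKdV)}: taking $\tfrac{2\pi}{T}=2$, $c=3$, $\|\widehat Q\|_1=5$, one has $25<36$ so \eqref{eqn:Q(gKdV)} holds, yet $D_{-1}(\tfrac12)$ and $D_0(\tfrac12)$ are centered at $\mp4i$ with radius $5$ each and overlap. So your fallback of ``absorbing it into the stated hypothesis'' is not optional---the extra condition must genuinely be added (or the straddling pair excluded from the conclusion); it cannot be verified from \eqref{eqn:Q(gKdV)} alone. The paper's proof silently skips this case, so while your write-up leaves the point unresolved, it is the more honest account of where the argument is incomplete.
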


%Neither of these conditions is the sharpest possible; 
Here we prioritize simplicity over sharpest possible result.

\begin{proof}
For \eqref{eqn:k*(gKdV)}, see Corollary~\ref{Thm: General disjoint disc condition}. For \eqref{eqn:Q(gKdV)}, we use $(y+1)^3 - y^3 > 3 (y+\frac12)^2$ and note that 
\[
3\Big(\frac{2\pi}{T}\Big)^3 (y+\tfrac12)^2 + \frac{4\pi}{T}(y+\tfrac12) + \frac{2\pi}{T} + \frac{2\pi c}{T}
\] 
has discriminant 
\[
4\Big(\frac{2\pi}{T}\Big)^2 \Big(\|\widehat Q\|_1^2 - 3 c \Big(\frac{2\pi}{T}\Big)^2\Big).
\]
If the discriminant is negative, then the quadratic polynomial remains positive, implying that all Gershgorin disks are mutually disjoint. This completes the proof.
\end{proof}

To illustrate our findings numerically, we take the mKdV equation
\begin{equation}\label{eqn:mKdV}
u_t=u_{xxx}+(u^3)_x, 
\end{equation}
which admits a family of periodic traveling waves of the form 
\begin{equation}\label{eqn:phi(gKdV)}
u(x,t)=\phi(x+ct)=\sqrt{2 m}A\cn(Ax + (2m-1)A^2 t)
\end{equation}
for some $A>0$, where $\cn(x,m)$ denotes the Jacobi elliptic function, and $0<m<1$ the elliptic modulus. The period is $4\K(m)$, where $\K(m)$ is the complete elliptic integral of the first kind. Linearizing \eqref{eqn:mKdV} about \eqref{eqn:phi(gKdV)}, in the frame of reference moving at the speed $-c$, we arrive at the spectral problem
\begin{equation}\label{eqn:mKdVStable}
-\lambda v = v_{xxx} - c v_x + (3\phi(x)^2v)_x, \quad \lambda\in\mathbb{C}.
\end{equation}
The period of the potential is $2\K(m)$, but we solve \eqref{eqn:mKdVStable} over the interval $[0,4\K(m)]$, the period of the traveling wave solution. 

We present our numerical results for $A=1$ and $m=\frac12$, for which the periodic traveling wave to \eqref{eqn:phi(gKdV)} oscillates between $\pm 1$ with a period approximately $7.416$. Solving \eqref{eqn:mKdVStable} using a spectral method, with known formulae for the Fourier coefficients (see, for instance, \cite{kiper1984fourier}), we numerically compute the spectrum. We numerically find that $\|\widehat{3\phi^2}\|_1\approx 1.63$ (excluding the mean value) and $c \approx 1.371$. Applying the result of Lemma \ref{Lemma:KdVdisjointcondition} (see also Corollary \ref{cor: General disjoint disc condition}), we numerically find that the $k$-th and $(k+1)$-st Gershgorin disks are disjoint provided 
\[
|k| > k^* \approx 2.15,
\] 
whereby for each $\mu\in(-\frac12,\frac12]$, at most seven Gershgorin disks can overlap. Specifically, the third disk must be disjoint from the fourth, although it might still overlap with the second. Applying the result of Theorem \ref{Thm: General disjoint disc condition}, we numerically find that the $k$-th and $(k+1)$-st Gershgorin disks are disjoint provided  
\[
|k+\mu| \gtrsim 1.32.
\]
Since $\mu \in (-\frac12,\frac12]$, this holds for
\[
|k| \gtrsim 1.82,
\]
implying that a Gershgorin disk is disjoint from others provided $|k|>2$. Consequently, there are at most five overlapping disks. Therefore, for each $\mu \in (-\frac12,\frac12]$, there can be at most four eigenvalues off the imaginary axis.

%\begin{figure}
%    \centering % Not needed 
%      \includegraphics[width=0.45\textwidth]{}
%    \includegraphics[width=0.45\textwidth]{Figures/NewKdV_Mu1.pdf}
%        \caption{(Left) The Gershgorin disks for $\mu=0.$ All disks with $|k|>2$ are disjoint from all other disks. \newline (Right) The smallest five Gershgorin disks $k\in\{-2,\ldots,2\}$ for  $\mu=0$. Note that the $k=0$ disk has radius $0$ for $\mu=0$. There are five eigenvalues in this region, $\lambda=0$ with multiplicity three and $\lambda=\pm 2.44 i$}
%        \label{fig:KdVzero}   
%\end{figure}

\begin{figure}\centering
    \begin{subfigure}[b]{.4\columnwidth}
\includegraphics[width=\textwidth]{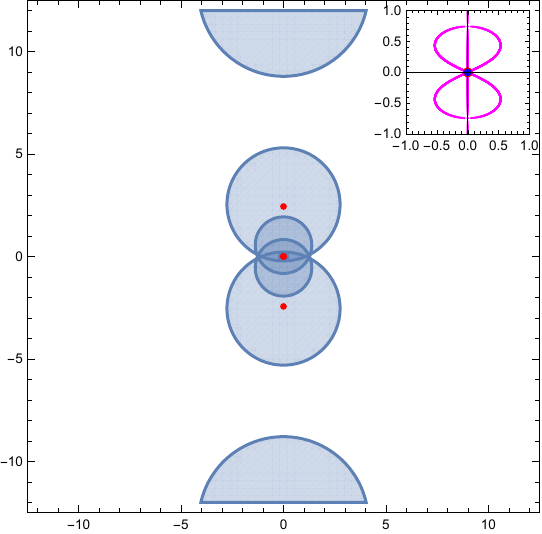}
        \caption{$\mu=0$}
        \label{fig:KdV0}
    \end{subfigure}
    \begin{subfigure}[b]{.4\columnwidth}
        \includegraphics[width=\textwidth]{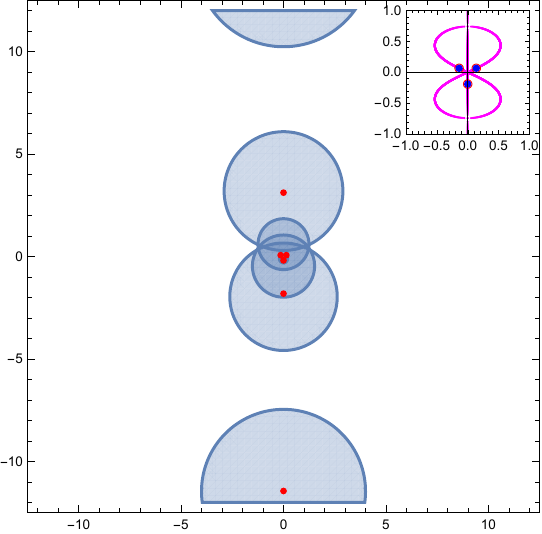}
        \caption{$\mu=0.1$}
        \label{fig:KdV1}
    \end{subfigure}
     \begin{subfigure}[b]{.4\textwidth}
        \includegraphics[width=\textwidth]{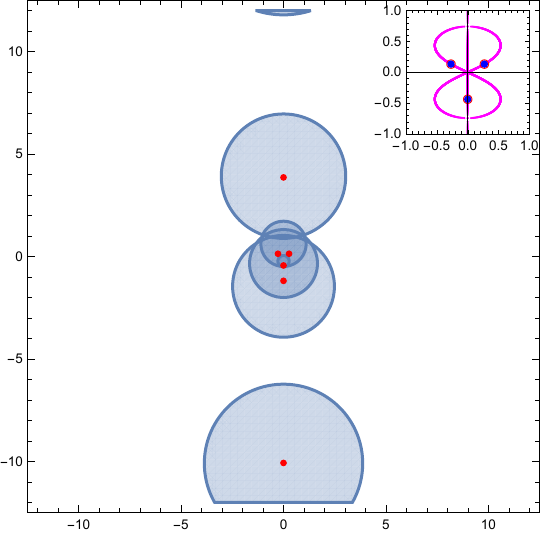}
        \caption{$\mu=0.2$}
        \label{fig:KdV2}
    \end{subfigure}
    \begin{subfigure}[b]{.4\columnwidth}
        \includegraphics[width=\textwidth]{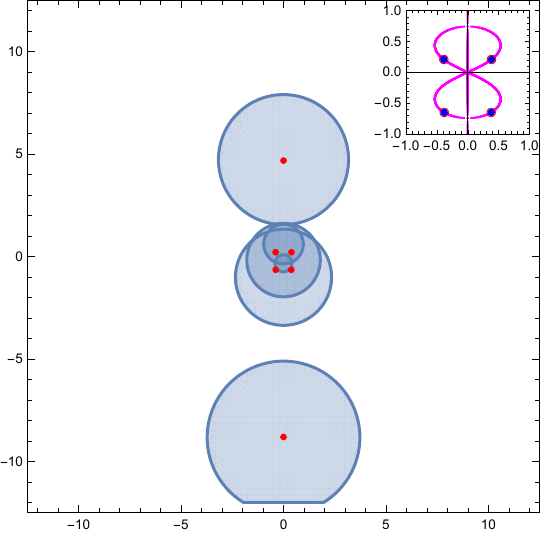}
        \caption{$\mu=0.3$}
        \label{fig:KdV3}
    \end{subfigure}  
    \begin{subfigure}[b]{.4\columnwidth}
        \includegraphics[width=\textwidth]{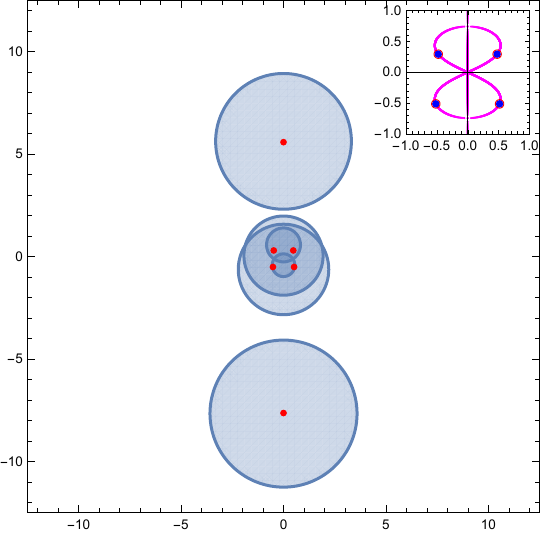}
        \caption{$\mu=0.4$}
    \label{fig:KdV4}
    \end{subfigure}   
        \begin{subfigure}[b]{.4\columnwidth}
        \includegraphics[width=\textwidth]{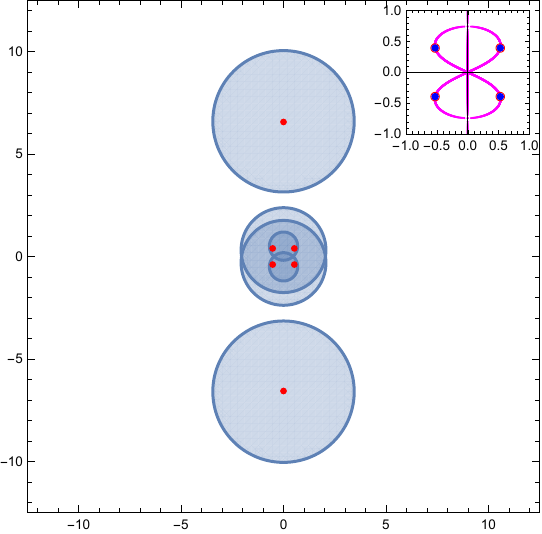}
        \caption{$\mu=0.5$}
    \label{fig:KdV5}
    \end{subfigure}
    \caption{The Gershgorin disks and the eigenvalues for \eqref{eqn:mKdVStable} and \eqref{eqn: Floquet quasiperiodic BCs} for several values of $\mu$, where $\phi$ is in \eqref{eqn:phi(gKdV)} for $A=1$ and $m=\frac12$. The Gershgorin disks are shown in blue, and the eigenvalues in red. The insets show the essential spectrum (magenta) as well as the eigenvalues. The largest connected component of Gershgorin disks consists of four or five overlapping disks, depending on the value of $\mu$, leading to two or four eigenvalues off the imaginary axis.}
    \label{fig:KdV Disks}
\end{figure}

Figure \ref{fig:KdV0} depicts the Gershgorin disks and the eigenvalues for $\mu=0$. Our analysis predicts that $k^*=2$. That means, $D_k(0)$ for $|k|>2$ are disjoint from all other disks, allowing at most five overlapping disks. The numerical results indeed confirm that there are five overlapping disks. 
Note that the zeroth Gershgorin disk has a radius of zero. While we have plotted it with a small but nonzero radius to make it visible, it remains challenging to discern. 

The overlapping Gershgorin disks enclose five eigenvalues, counted by algebraic multiplicity, specifically a triple eigenvalue at $0$ and two eigenvalues at approximately $\pm 2.44i$. 
The area covered by the three overlapping disks, which encompasses the complete `figure-8' spectral curve, is significantly larger than the area covered by the spectral curve itself. It is important to emphasize that the Gershgorin disks are guaranteed to contain the eigenvalues for $\mu=0$, but not necessarily the entire essential spectrum. 

Figures \ref{fig:KdV1}--\ref{fig:KdV5} show the Gershgorin disks for several values of $\mu$ between $0.1$ and $0.5$---corresponding to the antiperiodic boundary condition---along with the eigenvalues. 
For values of $\mu<0$, the spectrum can be obtained by symmetry after reflecting the results across the real axis. As $\mu$ increases, the triple eigenvalue at $0$ splits into three simple eigenvalues: two of them move into the complex upper half-plane, while the third moves down the imaginary axis. At $\mu=\mu_c \approx 0.24$, the eigenvalue moving down the imaginary axis collides with another eigenvalue coming up the imaginary axis, leading to a bifurcation of the pair into the complex plane. For the range $\mu \in (\mu_c,\frac12)$, there are four eigenvalues off the imaginary axis. When $\mu$ exceeds approximately $0.3$, the number of intersecting Gershgorin disks changes from five to four. 

Our analytical predictions are tight as far as the count is concerned. Indeed, $k^*=2$, ensuring that there can be at most $2k^*=4$ eigenvalues off the imaginary axis, which align well with the numerical findings. 

\subsection{The Kawahara equation}

We turn our attention to the Kawahara equation 
\begin{equation}\label{eqn:Kawahara0}
u_t=u_{xxxxx}+\alpha u_{xxx}+uu_x, \quad \alpha\in\mathbb{R},
\end{equation}
and the corresponding spectral problem 
\begin{equation}\label{eqn: Kawahara}
\lambda v=v_{xxxxx}+\alpha v_{xxx}+(Q(x)v)_x,\quad \lambda\in\mathbb{C},
\end{equation}
where $Q(x+T)=Q(x)$ for some $T>0$, the period. We set $c=0$ because focusing on stationary solutions suffices by Galilean invariance. Let 
\[
Q(x) = \sum_{k\in\mathbb{Z}} \widehat Q_k e^{\frac{2\pi k i x}{T}}\quad\text{and}\quad
\|\widehat Q\|_1 = \sum_{k\neq 0,\in\mathbb{Z}} |\widehat Q_k|,
\]
where we treat $\widehat{Q}_0$ separately. 

When $\alpha>0$, this scenario reflects competition between third-order and fifth-order dispersion terms, which is of particular interest. Since the dispersion relation is not monotone with the wave number, the results of the previous section may not directly apply without some adjustments, but the analysis remains elementary. 

For each $\mu \in (-\frac{1}{2}, \frac{1}{2}]$, the $k$-th Gershgorin disk for \eqref{eqn: Kawahara} and \eqref{eqn: Floquet quasiperiodic BCs} is defined as 
\begin{equation}\label{def:D(Kawaraha)}
\begin{aligned}
D_k(\mu)=\Big\{\lambda\in\mathbb{C} : \Big| \lambda-i\Big(\frac{2\pi}{T}(k+\mu)\Big)^5+&i\alpha\Big(\frac{2\pi}{T}(k+\mu)\Big)^3 \\
&- \frac{2\pi i }{T} (k+\mu)\widehat Q_0\Big| \leq \frac{2\pi}{T}|k+\mu|\|\widehat Q\|_1 \Big\}.
\end{aligned}
\end{equation}
See \eqref{eqn: General Gershgorin disc formula}, where the dispersion relation is $\omega(k)=k^5-\alpha k^3$. For each $\mu \in (-\frac{1}{2}, \frac{1}{2}]$, therefore, the eigenvalues of \eqref{eqn: Kawahara} and \eqref{eqn: Floquet quasiperiodic BCs} are contained in  $\bigcup_{k\in\mathbb{Z}} D_k(\mu)$. 

\begin{lemma}\label{Lemma:Kawaharadisjointcondition}
For each $\mu \in (-\frac{1}{2}, \frac{1}{2}]$, if
\begin{equation}\label{eqn:Kawahara bound}
\begin{aligned}
\Big|\Big(\frac{2\pi}{T}(k+1+\mu)\Big)^5
-&\alpha\Big(\frac{2\pi}{T}(k+1+\mu)\Big)^3 \\
-&\Big(\frac{2\pi}{T}(k+\mu)\Big)^5
+\alpha\Big(\frac{2\pi}{T}(k+\mu)\Big)^3 + \frac{2\pi}{T} \widehat Q_0\Big| \\
&\hspace*{60pt}>\frac{2\pi}{T}(|k+1+\mu|+|k+\mu|)\|\widehat Q\|_1
\end{aligned}
\end{equation}
for some $k\in\mathbb{Z}$, then $D_k(\mu)$ and $D_{k+1}(\mu)$ are disjoint, where $D_k(\mu)$ is in \eqref{def:D(Kawaraha)}. In particular, there exist $k^\ast$ and $k_\ast\in\mathbb{Z}$ such that $D_k(\mu)$ are disjoint provided $k>k^\ast$ or $k<k_\ast$.
\end{lemma}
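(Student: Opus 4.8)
The plan is to reduce disjointness of two disks to the elementary criterion that the distance between their centers exceeds the sum of their radii, and then to recognize that this criterion is precisely the hypothesis \eqref{eqn:Kawahara bound}. First I would record that, since $c=0$ and $\widehat Q_0\in\mathbb{R}$ (as $Q$ is real-valued), every center
\[
c_k := i\Big(\tfrac{2\pi}{T}(k+\mu)\Big)^5 - i\alpha\Big(\tfrac{2\pi}{T}(k+\mu)\Big)^3 + \tfrac{2\pi i}{T}(k+\mu)\widehat Q_0
\]
of the disk \eqref{def:D(Kawaraha)} lies on the imaginary axis. Because the centers are collinear, the Euclidean distance $|c_{k+1}-c_k|$ equals the absolute value of the difference of their (purely imaginary) coordinates. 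Computing $c_{k+1}-c_k$, the two contributions $\tfrac{2\pi}{T}(k+1+\mu)\widehat Q_0$ and $\tfrac{2\pi}{T}(k+\mu)\widehat Q_0$ combine to leave only $\tfrac{2\pi}{T}\widehat Q_0$, and the remaining quintic and cubic terms are exactly those appearing inside the modulus on the left-hand side of \eqref{eqn:Kawahara bound}. The sum of the two radii is $\tfrac{2\pi}{T}(|k+\mu|+|k+1+\mu|)\|\widehat Q\|_1$, which is precisely the right-hand side. Hence \eqref{eqn:Kawahara bound} is equivalent to $|c_{k+1}-c_k|>r_k+r_{k+1}$, i.e.\ to disjointness of $D_k(\mu)$ and $D_{k+1}(\mu)$, proving the first assertion.

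For the existence of $k^\ast$ and $k_\ast$, I would compare growth rates in $|k|$. Writing $a=\tfrac{2\pi}{T}(k+\mu)$ and $b=\tfrac{2\pi}{T}(k+1+\mu)$, so that $b-a=\tfrac{2\pi}{T}$ is fixed, the quintic difference factors as $b^5-a^5=(b-a)(b^4+b^3a+b^2a^2+ba^3+a^4)$, which is strictly positive for every $k$ (the fifth power is increasing) and grows like $|k|^4$ as $|k|\to\infty$. The cubic correction $-\alpha(b^3-a^3)$ grows only like $|k|^2$, and $\tfrac{2\pi}{T}\widehat Q_0$ is constant, so the left-hand side of \eqref{eqn:Kawahara bound} is eventually positive and of order $|k|^4$. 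The right-hand side grows linearly, like $|k|$, since $\|\widehat Q\|_1$ is fixed and $|k+\mu|+|k+1+\mu|\sim 2|k|$. Because $4>1$, the inequality holds for all $|k|$ beyond a finite threshold, yielding $k^\ast$ for large positive $k$ and $k_\ast$ for large negative $k$.

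The point to handle with care—more a feature than a genuine obstacle—is that $\omega(k)=k^5-\alpha k^3$ is \emph{not} monotone when $\alpha>0$, since $\omega'(k)=k^2(5k^2-3\alpha)$ is negative for $0<k^2<3\alpha/5$. On this window the quintic difference need not dominate the radii, and disks may legitimately overlap; this is exactly why the conclusion only guarantees disjointness outside a bounded range $[k_\ast,k^\ast]$, in contrast with the all-$k$ disjointness available for gKdV in Lemma~\ref{Lemma:KdVdisjointcondition}. The degree comparison above uses nothing beyond the fact that the leading exponent $5$ of $\omega$ exceeds $2$, so the argument is an instance of the general mechanism of Corollary~\ref{cor: General disjoint disc condition}; if an explicit threshold were wanted, one could bound $b^5-a^5\geq 5a^4(b-a)$ from below and solve the resulting inequality for $k$, but for the existence claim the asymptotic comparison suffices.
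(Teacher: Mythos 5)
Your proof is correct and follows essentially the same route as the paper, which simply invokes Theorem~\ref{Thm: General disjoint disc condition} (distance between centers exceeds the sum of radii) and then notes that the left side of \eqref{eqn:Kawahara bound} is a degree-four polynomial in $k$ while the right side grows only linearly. Your additional verifications---the cancellation leaving $\tfrac{2\pi}{T}\widehat Q_0$, the collinearity of the centers on the imaginary axis, and the remark about non-monotonicity of $\omega$ for $\alpha>0$---are accurate and consistent with the paper's surrounding discussion.
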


See Theorem~\ref{Thm: General disjoint disc condition}.  
Lemma~\ref{Lemma:Kawaharadisjointcondition} provides a sufficient condition for the Gershgorin disks to be disjoint. The left side of \eqref{eqn:Kawahara bound} represents the distance between the centers of the $k$-th and $(k+1)$-st disks, which is a polynomial of degree four, while the right side corresponds to the sum of their radii and grows linearly. For sufficiently large $|k|$, the former is greater than the latter. 

We present some numerical experiments for \eqref{eqn:Kawahara0}, where $\alpha=2$, which admits explicit stationary solutions of the form
\[
u(x) = A_1 + A_2 \cn^2(\sigma x,m) + A_3 \cn^4(\sigma x,m),
\]
where $\cn(x,m)$ is the Jacobi elliptic function with the elliptic modulus $m$; $A_1$, $A_2$ and $A_3$ are constants, depending on $m$, and  $\sigma\in\mathbb{R}$ satisfies $|\frac{\alpha}{\sigma^2}|<52$. See, for instance, \cite{Bronski.Hur.Marangell}, for further details. We set $m=.6185$, resulting in $A_1=0.659$, $A_2=2.306$, $A_3=-2.51$. 

\begin{figure}
\centering
    \begin{subfigure}[b]{.4\columnwidth}
\includegraphics[width=\textwidth]{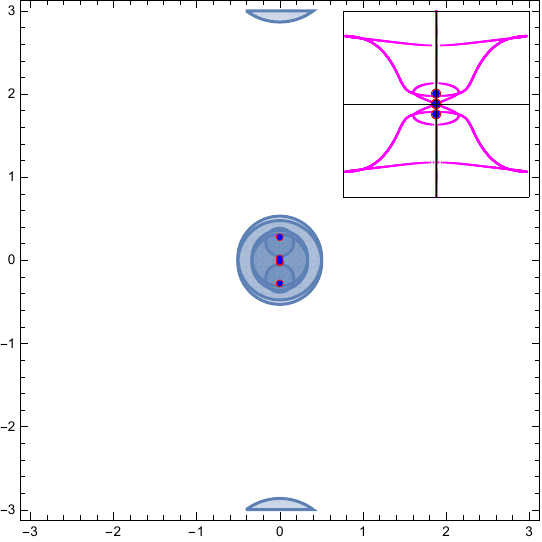}
        \caption{$\mu=0$}
        \label{fig:Kawa0}
        \end{subfigure}
        \begin{subfigure}[b]{.4\columnwidth}
\includegraphics[width=\textwidth]{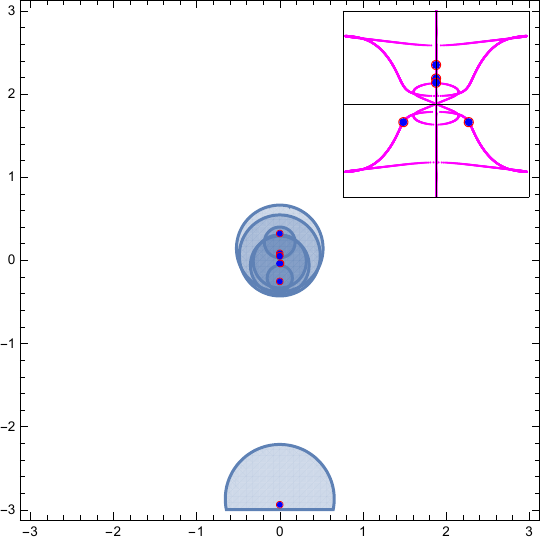}
        \caption{$\mu=0.1$}
        \label{fig:Kawa0}
        \end{subfigure}
        \begin{subfigure}[b]{.4\columnwidth}
\includegraphics[width=\textwidth]{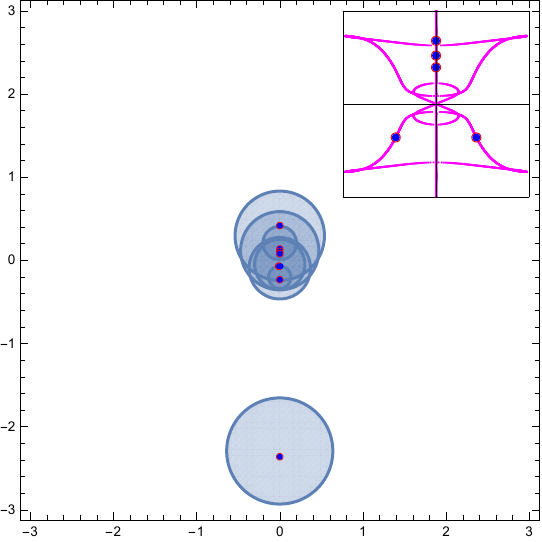}
        \caption{$\mu=0.2$}
        \label{fig:Kawa0}
        \end{subfigure}
        \begin{subfigure}[b]{.4\columnwidth}
\includegraphics[width=\textwidth]{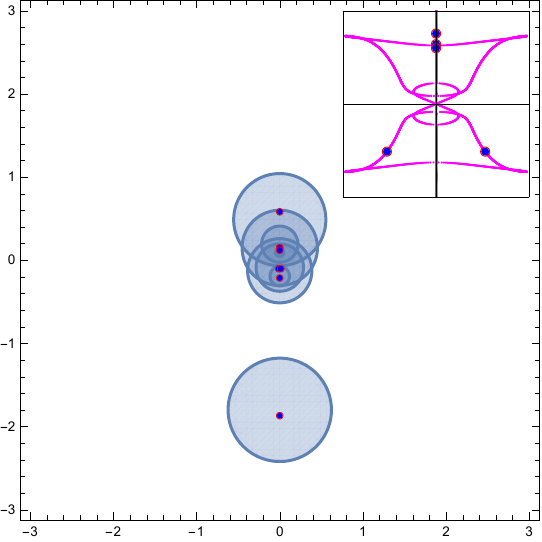}
        \caption{$\mu=0.3$}
        \label{fig:Kawa0}
        \end{subfigure}
        \begin{subfigure}[b]{.4\columnwidth}
\includegraphics[width=\textwidth]{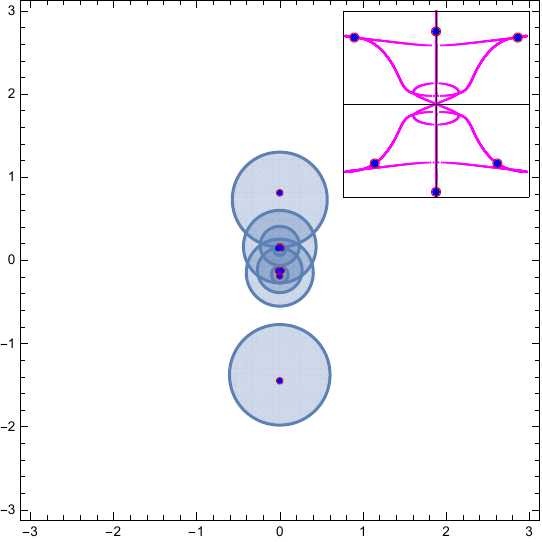}
        \caption{$\mu=0.4$}
        \label{fig:Kawa0}
        \end{subfigure}
        \begin{subfigure}[b]{.4\columnwidth}
\includegraphics[width=\textwidth]{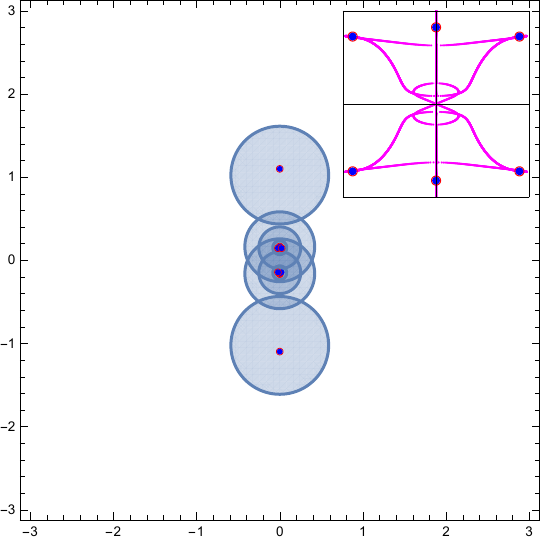}
        \caption{$\mu=0.5$}
        \label{fig:Kawa0}        
    \end{subfigure}
    \caption{The Gershgorin disks (blue), the eigenvalues (red), and the essential spectrum for the linearized Kawahara equation about a stationary periodic solution, for $\mu=0$, $0.1$, $0.2$, $0.3$, $0.4$, and $0.5$. The insets highlight the essential spectrum (magenta) and eigenvalues in the vicinity of $0$, where $\operatorname{Re}(\lambda)\in(-0.025,0.025)$ and $\operatorname{Im}(\lambda) \in (-0.2,0.2)$.}
    \label{fig:KawaharaDisks}
\end{figure}

Figure \ref{fig:KawaharaDisks} depicts the Gershgorin disks and the eigenvalues for $\mu=0$, $0.1$, $0.2$, $0.3$, $0.4$ and $0.5$, along with the essential spectrum. Over most of the range $\mu\in(-\frac12,\frac12]$, there exists a connected component consisting of seven intersecting disks, except for a small interval of near $\pm\frac{1}{2}$ where there are eight intersecting disks. 

At $\mu=0$, there are seven eigenvalues in the connected component, counted by algebraic multiplicity, specifically $\lambda \approx \pm .28i$, $\pm .022i$, and $0$ with multiplicity three. As $\mu$ increases, the triple eigenvalue at $0$ splits into two complex eigenvalues in the upper half-plane and one eigenvalue on the negative imaginary axis. As $\mu$ increases further, various collisions occur, causing the number of eigenvalues off the imaginary axis to vary between two and four. Since there are either seven or eight intersecting Gershgorin disks over the range $\mu\in(-\frac12,\frac12]$, the number of potential off-axis eigenvalues could a priori be as high as eight, although it never appears to exceed four.

\begin{remark*}[The Benjamin-Ono equation]\rm
An intriguing example is the Benjamin-Ono (BO) equation
\[
u_t+\mathcal{H}u_{xx}+uu_x=0, 
\]
where $\mathcal{H}$ denotes the Hilbert transform, defined via the Fourier transform as $\widehat{\mathcal{H}v}(k)=-i\operatorname{sgn}|k|\widehat{v}(k)$. The corresponding spectral problem takes the form
\begin{equation}\label{eqn:BO}
\lambda v+\mathcal{H}v_{xx}-cv_x+(Q(x)v)_x=0, \quad\lambda\in\mathbb{C},
\end{equation}
where $c\in\mathbb{R}$ is a constant and $Q(x)$ is a real-valued function satisfying $Q(x+T)=Q(x)$ for some $T>0$, the period. Here we do not assume that $Q(x)$ has zero mean. 

Considering \eqref{eqn:BO} and \eqref{eqn: Floquet quasiperiodic BCs}, \eqref{eqn: General Gershgorin disc formula} becomes
\begin{equation}\label{eqn: BO Dk(mu)}
D_k(\mu)=
\Big\{\lambda\in\mathbb{C} :\Big|\lambda-i\Big(\frac{2\pi}{T}\Big)^2\operatorname{sgn}(k+\mu)(k+\mu)^2-\frac{2\pi ic}{T}(k+\mu)\Big| 
\leq \frac{2\pi}{T}|k+\mu|\|\widehat{Q}\|_1 \Big\},
\end{equation}
where $k\in\mathbb{Z}$ and $\mu \in(-\frac12,\frac12]$. %Indeed, the dispersion relation for the BO equation is $\omega(k)=ik^2-ick$. Therefore, for each $\mu (-\frac{\pi}{T},\frac{\pi}{T}]$, any nonzero eigenvalues of \eqref{eqn:BO} and \eqref{eqn: Floquet quasiperiodic BCs} are contained in $\bigcup_{k\in\mathbb{Z}} D_k(\mu)$. Furthermore, each connected component of $n$ intersecting Gershgorin disks contains precisely $n$ eigenvalues. 

Additionally, we compute 
\begin{equation}\label{eqn:Ben-Ono Q Norm}
\|\widehat{Q}\|_1=\frac{\frac{8\pi^2}{cT^2}}{1-\sqrt{1-\frac{4 \pi^2}{c^2 T^2}}}.
\end{equation}
Recall from \cite{ono1975algebraic} that periodic traveling waves of the BO equation are explicitly given by 
\[
Q(x)=\frac{A}{1- B \operatorname{cos} \left(\frac{2 \pi x}{T} \right)},
\qquad A=\frac{8 \pi^2}{cT^2}\quad\text{and}\quad B= \sqrt{1- \frac{4 \pi^2}{T^2 c^2}}.
\]
We assume $2\pi<cT$, so that $B$ is real-valued, and assume $c>0$. The Fourier coefficients are calculated via the contour integration, yielding 
\[
\widehat{Q}_k=\frac{A}{2 \pi}\int \limits_0^{2 \pi} \frac{e^{ i k \xi } }{1- \frac{B}{2} (e^{i \xi} +e^{-i \xi})}~d \xi
=\frac{A}{\sqrt{1-B^2}} \Big(\frac{1-\sqrt{1-B^2}}{B}\Big)^{|k|}.
\]
Recalling $\|\widehat{Q}\|_1=\sum_{-\infty}^\infty |\widehat{Q}_k|$, \eqref{eqn:Ben-Ono Q Norm} follows after evaluating the geometric series. 

We claim that the Gershgorin disks for the BO equation (see \eqref{eqn: BO Dk(mu)}) are never disjoint except potentially at one point. Indeed, $D_k(\mu)$ is centered at $i\big(\frac{2\pi}{T}\big)^2\operatorname{sgn}(k+\mu)(k+\mu)^2+O(k)$ as $|k|\to\infty$, implying that the distance between adjacent disk centers grows asymptotically like $\frac{8 \pi^2}{T^2}|k|$ as $|k|\to\infty$, while the radius of $D_k(\mu)$ is $\frac{2\pi}{T}|k+\mu|\|\widehat Q\|_1$. For these disks to be disjoint, 
\[
\frac{8 \pi^2k}{T^2}>\frac{2 \pi k}{T}\|\widehat{Q}\|_1 
\]
must hold, which simplifies to 
\[
\frac{T \|\widehat{Q}\|_1}{4 \pi}<1.
\]
Substituting \eqref{eqn:Ben-Ono Q Norm}, on the other hand,
\[
\frac{x}{1-\sqrt{1-x^2}}<1, \qquad x=\frac{2 \pi}{c T}
\]
must hold, which is impossible except potentially at one point where equality might hold. At such a point, an appropriate choice of parameters could allow the terms of order $k$ in the disk centers to align in a way that the disks are eventually disjoint.  

Although the Gershgorin disk theorem argument does not offer the desired bound for the BO equation, numerical experiments (see \cite{SarahThesis} for details) suggest that the spectrum for the BO equation is confined to regions significantly smaller than theoretical predictions. 
\end{remark*}

\begin{remark*}\rm
Our results apply to equations of the form \eqref{eqn: General eigenvalue problem}, where the dispersion relation satisfies $\omega(k)=O(|k|^d)$ as $|k| \to \infty$ for some $d>2$, so that the $k$-th Gershgorin disk is centered at a $O(k^d)$ distance from $0\in\mathbb{C}$ along the imaginary axis, with a $O(k^{d-1})$ distance to adjacent disks, and a radius of $O(k)$ as $|k| \to \infty$. For these disks to remain asymptotically disjoint as $|k| \to \infty$, it is necessary that the distance between adjacent disks grows at a rate greater than that of the radius, requiring $d>2$. 

It is important to note that our results are not directly applicable to nonlinear dispersive equations of the form
\[
u_t+\omega(-i\partial_x)u_x+uu_x=0,
\]
where $\omega(k)=O(|k|^d)$ as $|k| \to \infty$ for some $d<1$. Notable examples include the Whitham equation \cite{whitham2011linear}, for which $\omega(k)=\sqrt{\frac{\tanh(k)}{k}}$, and the gravity-capillary Whitham equation \cite{HJcapWhitham} with $\omega(k)=O(|k|^{1/2})$ for $|k|\gg 1$. Additionally, our approach does not directly extend to the nonlinear Schr\"odinger equation. See \cite{SarahThesis} for further details. %Recently, Gaebler and Stanislavova \cite{gaebler2021nls} have successfully demonstrated that unstable eigenvalues for the gKdV and NLS equations are confined in horizontal strips in the complex plane containing the origin. On the other hand, our methodology is capable of providing estimates on the number of unstable eigenvalues and tighter bounds for the regions of unstable eigenvalues.
\end{remark*}

\subsection{The BBM equation}

Lastly, we consider the BBM equation
\[
u_t=u_{xxt}+uu_x,
\]
and the corresponding spectral problem 
\begin{align}\label{eqn:BBM}
%(1-\partial_{x'}^2)\lambda v&=cv_{x'}-c \partial_{x'}^3v+(\phi(x')v)_{x'},&\qquad \lambda\in\mathbb{C}, \\
\lambda v = c v_{x} + (1-\partial_{x}^2)^{-1} (Q(x) v)_x,
\end{align}
where $c\in\mathbb{R}$, and $Q(x+T)=Q(x)$ some $T>0$, the period. 
%where $\phi(x'+T)=\phi(x')$ for some $T>0$, the period. Here $x'=x-ct$ is the spatial variable associated with the co-moving frame. Henceforth  we will drop the prime on the spatial variable. 
In previous sections, we have assumed that $Q(x)$ has zero mean, incorporating the mean term into $c$. Due to the structure of the symplectic operator involving $(1-\partial_{x}^2)^{-1}$, however, this is no longer convenient, and here we do not assume that $Q(x)$ has zero mean.

%Consider \eqref{eqn:BBM} and \eqref{eqn: Floquet quasiperiodic BCs}. 
For each $\mu \in (-\frac{1}{2}, \frac{1}{2}]$, \eqref{eqn: General Gershgorin disc formula} becomes
\begin{equation}\label{eqn:D(BBM)}
D_k(\mu)=\Big\{\lambda\in\mathbb{C}:\Big|\lambda- \frac{2\pi ic}{T}(k+\mu)\Big| 
< \frac{\frac{2\pi }{T}|k+\mu|}{1+\big(\frac{2\pi}{T}\big)^2(k+\mu)^2}\|\widehat{Q}\|_1\Big\},\qquad k\in\mathbb{Z},
\end{equation}
and any eigenvalues of \eqref{eqn:BBM} and \eqref{eqn: Floquet quasiperiodic BCs} must belong to $\bigcup_{k\in\mathbb{Z}} D_k(\mu)$.

The proof of \eqref{eqn:D(BBM)} follows a similar methodology used for the gKdV and Kawahara equations. We omit the details. It is noteworthy that the scaling is slightly different here. For equations of KdV type, the spacing between Gershgorin disks grows algebraically for large $|k|$, while the radii of the disks increase linearly. In contrast, here the disks are equally spaced, and the radii tend to zero as $|k|\to\infty$.  

\begin{lemma}\label{lemma: BBMsuff}
For sufficiently small $\|\widehat Q\|_1$, all Gershgorin disks in \eqref{eqn:D(BBM)} are disjoint for all $\mu\in(-\frac12,\frac12]$. Specifically, if
\begin{equation}\label{eqn:BBMAbsolute}
\frac{2 \pi |c|}{T} > \| \widehat Q\|_1
\end{equation}
then $D_k(\mu)\cap D_{k'}(\mu)=\emptyset$ for all $k\neq k'$ and all $\mu\in(-\frac12,\frac12]$.
Moreover if \eqref{eqn:BBMAbsolute} does not hold then
\begin{equation}\label{eqn: BBM disjoint condition 2}
k+\mu> \frac{T^2}{2\pi^2|c|}\|\widehat{Q}\|_1
\end{equation} 
implies that $D_k(\mu)$ and $D_{k+1}(\mu)$ are disjoint. In particular, there exist $k^\ast$ and $k_\ast \in \mathbb{Z}$ such that $D_k(\mu)$ are disjoint for $k>k^\ast$ or $k<k_\ast$. 
\end{lemma}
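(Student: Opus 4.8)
The plan is to exploit the feature---peculiar to the BBM case---that every disk $D_k(\mu)$ in \eqref{eqn:D(BBM)} is centered on the imaginary axis at $\frac{2\pi ic}{T}(k+\mu)$, so that the centers are \emph{equally spaced} with gap $\frac{2\pi|c|}{T}$ between consecutive indices and the distance between the centers of $D_k(\mu)$ and $D_{k'}(\mu)$ is exactly $\frac{2\pi|c|}{T}|k-k'|$. Two such disks are therefore disjoint precisely when this center distance exceeds the sum of their radii, and the whole lemma reduces to scalar inequalities for the radius. Writing $y=k+\mu$ and $r(y)=\frac{\frac{2\pi}{T}|y|}{1+(\frac{2\pi}{T})^2 y^2}\|\widehat Q\|_1$, I would record at the outset two elementary facts: (i) the global bound $r(y)\leq\frac12\|\widehat Q\|_1$, obtained by maximizing $u/(1+u^2)$ in $u=\frac{2\pi}{T}|y|$ at $u=1$; and (ii) the decay bound $r(y)<\frac{T\|\widehat Q\|_1}{2\pi|y|}$, obtained by dropping the $1$ in the denominator. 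These two bounds govern the two regimes of the lemma.

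For the first assertion I would use bound (i): the sum of the radii of any two disks is at most $\|\widehat Q\|_1$, while the distance between the centers of any two distinct disks is at least $\frac{2\pi|c|}{T}$. Hence \eqref{eqn:BBMAbsolute}, namely $\frac{2\pi|c|}{T}>\|\widehat Q\|_1$, forces the center distance to exceed the sum of radii for \emph{every} pair $k\neq k'$ simultaneously, giving mutual disjointness of all disks at once.

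For the second assertion I would use the decay bound (ii). For $k+\mu>0$ one has $r_k+r_{k+1}<\frac{T\|\widehat Q\|_1}{2\pi(k+\mu)}+\frac{T\|\widehat Q\|_1}{2\pi(k+1+\mu)}<\frac{T\|\widehat Q\|_1}{\pi(k+\mu)}$, and requiring this to be smaller than the consecutive-center gap $\frac{2\pi|c|}{T}$ rearranges to $T^2\|\widehat Q\|_1<2\pi^2|c|(k+\mu)$, i.e. exactly \eqref{eqn: BBM disjoint condition 2}. This is the cleanest route, and I would present it as an application of Theorem~\ref{Thm: General disjoint disc condition}, with the symmetric estimate for $y<0$ handling the case $k<k_\ast$.

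Finally, for the existence of $k^\ast,k_\ast$: the second assertion already yields disjointness of \emph{consecutive} disks once the index is large enough, and I would upgrade this to ``$D_k(\mu)$ disjoint from \emph{all} other disks'' by a telescoping observation. If $D_j(\mu)$ and $D_{j+1}(\mu)$ are disjoint for every $j\geq k^\ast$, then summing the consecutive-gap inequalities from $j$ to $k-1$ gives $\frac{2\pi|c|}{T}(k-j)>r_j+2\sum_{m=j+1}^{k-1}r_m+r_k\geq r_j+r_k$, so any two disks of index $\geq k^\ast$ are disjoint; separation from the finitely many lower-index disks near the radius peak then follows because their center distance $\frac{2\pi|c|}{T}(k-j)$ grows linearly in $k$ while $r_j+r_k\leq\|\widehat Q\|_1$ stays bounded. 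I expect this last upgrade to be the only delicate point, since it requires choosing $k^\ast$ slightly larger than the threshold in \eqref{eqn: BBM disjoint condition 2} to absorb the boundary index and the bounded cluster of near-peak disks; the two displayed criteria \eqref{eqn:BBMAbsolute} and \eqref{eqn: BBM disjoint condition 2} themselves are immediate from bounds (i) and (ii).
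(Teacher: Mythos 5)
Your proposal is correct and follows essentially the same route as the paper: the first assertion comes from the uniform radius bound $\frac{|x|}{1+x^2}\leq\frac12$ against the fixed center spacing $\frac{2\pi|c|}{T}$, and the second from a decay bound on adjacent radii that rearranges to \eqref{eqn: BBM disjoint condition 2} (the paper uses monotonicity of $\frac{x}{1+x^2}$ for $x>1$ where you use $\frac{x}{1+x^2}<\frac1x$, an immaterial difference). Your telescoping upgrade from consecutive to all-pairs disjointness is a small addition the paper leaves implicit, but it does not change the substance of the argument.
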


We emphasize that these conditions are designed to be simple to state and are not necessarily the sharpest possible.

\begin{proof}
Note that 
\[
\frac{|x|}{1+x^2} \leq \frac12
\]
implies that all Gershgorin disks have a radius $<\frac12\|\widehat Q\|_1$. Since the distance between the centers of two adjacent disks is $\frac{2\pi |c|}{T}$, \eqref{eqn:BBMAbsolute} follows. 

Additionally, note that the function $\frac{x}{1+x^2}$ is decreasing for $x>1$, so that the radius of the $(k+1)$-st disk is smaller than the radius of the $k$-th disk if $\frac{2\pi}{T}(k+\mu)>1$. Consequently, for the Gershgorin disks to be disjoint, 
\[
\frac{2\pi c}{T} > 2 \frac{x}{1+x^2} \|\widehat Q\|_1
\]
which simplifies to 
\[
1 > \frac{b x}{1+x^2},\quad 
\text{$b= \frac{\|\widehat Q\|_1 T}{\pi c}$ and $x=\frac{2\pi}{T}(k+\mu)$}.
\]
Clearly, $1 > \frac{b x}{1+x^2}$ when $x>b$, and if \eqref{eqn:BBMAbsolute} does not hold, then necessarily $b>2$, implying monotonicity. This completes the proof.
\end{proof}

\begin{figure*}
    \centering % Not needed
    \begin{subfigure}[b]{.4\textwidth}
        \includegraphics[width=\textwidth]{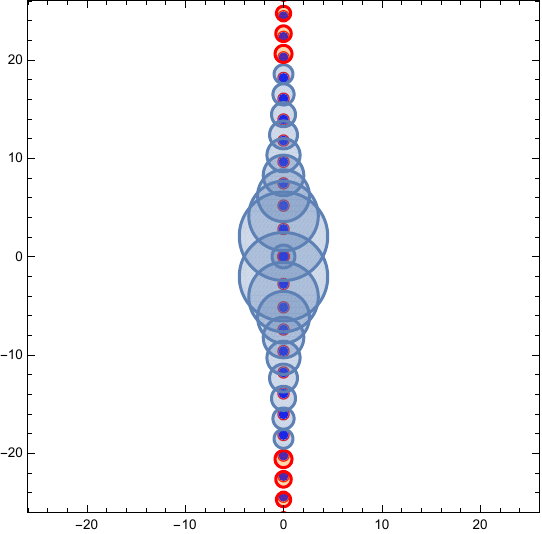}
        \caption{$\mu=0$}
        \label{fig:BBMMu0}
    \end{subfigure}    
    \begin{subfigure}[b]{.4\columnwidth}
        \includegraphics[width=\textwidth]{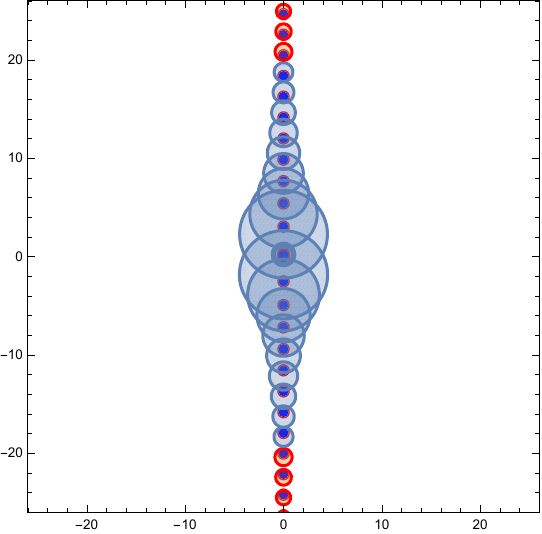}
        \caption{$\mu=0.1$}       \label{fig:BBMMu1}
    \end{subfigure}
        \begin{subfigure}[b]{.4\columnwidth}
        \includegraphics[width=\textwidth]{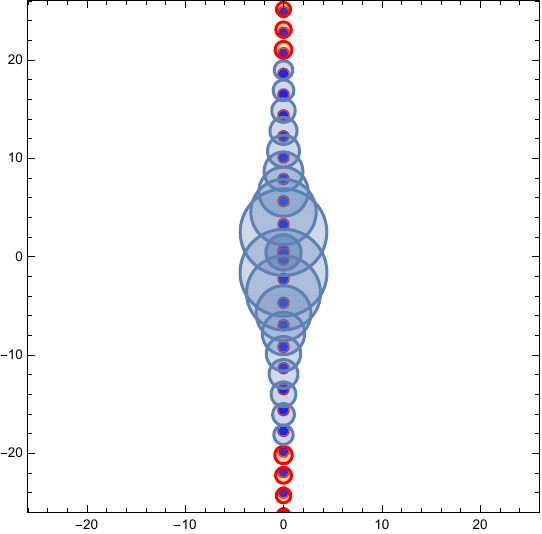}
        \caption{$\mu=0.2$}
        \label{fig:BBMMu2}
    \end{subfigure}
        \begin{subfigure}[b]{.4\textwidth}
        \includegraphics[width=\textwidth]{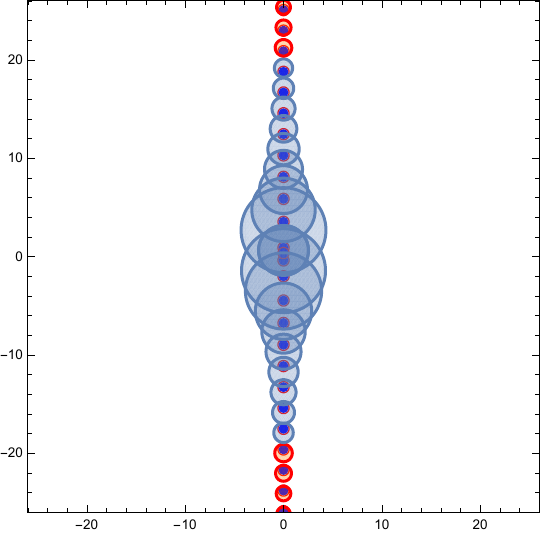}
        \caption{$\mu=0.3$}
        \label{fig:BBMMu3}
    \end{subfigure}    
    \begin{subfigure}[b]{.4\columnwidth}
        \includegraphics[width=\textwidth]{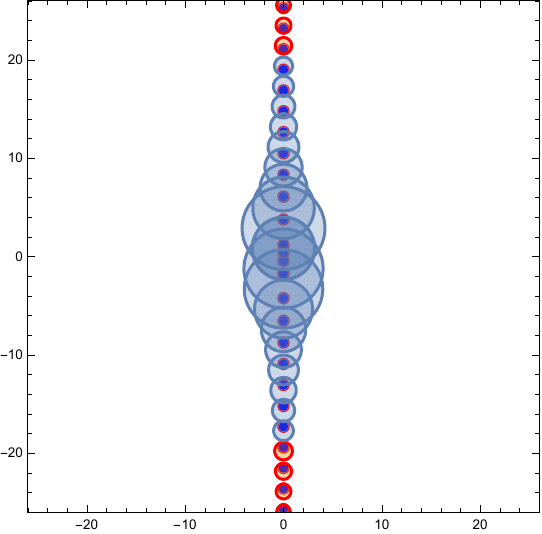}
        \caption{$\mu=0.4$}
        \label{fig:BBMMu4}
    \end{subfigure}
        \begin{subfigure}[b]{.4\columnwidth}
        \includegraphics[width=\textwidth]{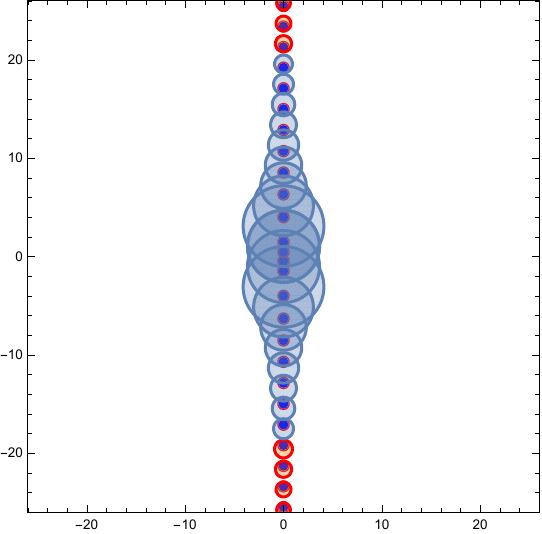}
        \caption{$\mu=0.5$}
        \label{fig:BBMMu5}
    \end{subfigure}
   \caption{The Gershgorin disks and the spectrum for \eqref{eqn:BBM}, for $\mu=0,0.1,0.2,0.3,0.4,0.5$. The disks outlined in red ($|k|>9$) are disjoint from all other disks for all values of $\mu$.}
    \label{fig:BBMDiscs}
\end{figure*}

The BBM equation admits a family of periodic traveling waves of the form 
\[
u(x,t) = -\frac{6 m}{2m-1}\cn^2\Big(\frac{x-2t}{2\sqrt{2m-1}},m\Big) 
\]
for the elliptic parameter $m$ in the range $(\frac12,1]$. By rescaling, all non-constant traveling waves can be normalized to have a wave speed of $\pm2$. We consider the case $m=\frac34$, which gives a periodic traveling wave oscillating between $0$ and $-9$ with a period $2\sqrt{2}\K(\frac34)\approx 6.0996$, where $\K(m)$ denotes the complete elliptic integral of the first kind. Since all the Fourier coefficients of the potential have the same sign, we find that $\|\widehat Q\|_1=\|Q\|_\infty=9$. Applying the result of Lemma \ref{lemma: BBMsuff}, we numerically find that the Gershgorin disks are disjoint provided 
\[
k+\mu>\frac{T^2 \|\widehat Q\|_1}{2\pi^2 |c|} = \frac{9(6.0996)^2}{4\pi^2}\approx 8.48.
\]
Since $\mu\in (-\frac12,\frac12]$, this implies that the disks are disjoint for all $|k|>9$ for all $\mu$.

Figure \ref{fig:BBMDiscs} presents some numerical results, specifically, showing $D_k(\mu)$ for $k=-12,-11$, $\dots$, $11,12$ for several values of $\mu \in [0,\frac12]$. The disks marked in red (corresponding to $|k|>10$) are disjoint from all other disks for all values of $\mu$, while the remaining disks intersect another disk for at least some range of $\mu$. The estimate of $k^*$ is therefore quite tight, although in this case there are no instabilities and the spectrum remains confined to the imaginary axis. 

\section{Concluding remarks}\label{sec:conclusion}

We have introduced a novel methodology applicable to a broad class of dispersive Hamiltonian PDEs, for establishing bounds on regions where the spectrum of the corresponding quasi-periodic eigenvalue problem may potentially deviate from the imaginary axis. We also provide estimates for the number of such unstable eigenvalues. When the dispersion relation grows strictly faster than quadratically in the wave number, we adopt a Gershgorin disk theorem argument, along with the symmetry properties of the spectrum. 

We would be remiss if we did not elaborate on the similarities and differences between the present study and the recent work in \cite{gaebler2021nls}. One can summarize the methods in \cite{gaebler2021nls} are primarily semigroup theoretic. Specifically, for the gKdV equation (see \eqref{eqn:gKdV}), the authors in \cite{gaebler2021nls} bound the symmetric part of the operator, leading to the conclusion (see \cite[Theorem 3]{gaebler2021nls}) that the eigenvalues off the imaginary axis lie within a strip
\[
\|\operatorname{Im}(\lambda)\| \lesssim \| Q\|_\infty^3.
\]
This is notably similar to our Corollary \ref{cor: General disjoint disc condition}, which establishes that any eigenvalues off the imaginary axis must have a wave number $\lesssim \|\widehat Q \|_1$. Since the centers of the Gershgorin disks for the gKdV equation are located at $k^3 + O(k^2)$ as $|k|\to\infty$, this implies that the off-axis eigenvalues must lie in a strip of width $\|\widehat Q \|_1^3$. 

It is generally true that $\|Q\|_\infty \leq \| \widehat Q \|_1$. While there do exist $L^\infty$ functions for which the Fourier coefficients are not absolutely summable, for smooth potentials arising in the stability analysis for periodic traveling waves, one expects that the two norms are of the same order. In cases involving elliptic functions, which are prevalent in many applications, the Fourier coefficients either have a fixed sign or alternate in sign, so that the $\ell_1$ norm of the Fourier coefficients matches exactly the $L^\infty$ norm of the potential. 

On the other hand, the two approaches are mutually complementary. The main objective of \cite{gaebler2021nls} is to derive bounds on the operator norm of the semigroup generated by the linearized Hamiltonian flow. This is accomplished in \cite[Theorem 3]{gaebler2021nls}, assuming knowledge on $N$, the dimension of the largest Jordan block of the linearized operator. It is not evident, however, if such quantities can be estimated using purely semigroup theoretic techniques. While it follows from general arguments that the number must be finite and, indeed, the eigenvalues lie in a bounded region and cannot have finite accumulation points, no a priori control is available. 
On the other hand, such an estimate {\em does follow from} our Corollary \ref{cor: General disjoint disc condition}. Since all eigenvalues with $|k|>k^*$ are necessarily simple, it follows that $N\leq 2k^*+1$. This enables a semigroup estimate entirely in terms of a norm of the potential without the need to explicitly solve the spectral problem.

\bibliographystyle{amsplain}
\bibliography{refs}

\end{document}